\newtheorem{theorem}{Theorem}[section]
\newtheorem{corollary}[theorem]{Corollary}
\newtheorem{remark}{Remark}[section]
\newtheorem{notation}{Notation}[section]
\newtheorem{lemma}[theorem]{Lemma}
\newtheorem{definition}{Definition}[section]
\newenvironment{proof}[1][Proof]{\noindent\textbf{#1.} }{\ \rule{0.5em}{0.5em}}
\begin{document}

\title{On the indefinite Kirchhoff type problems with local sublinearity and
linearity\thanks{%
J. Sun was supported by the National Natural Science Foundation of China
(Grant No. 11201270, No.11271372), Shandong Natural Science Foundation
(Grant No. ZR2012AQ010), and Young Teacher Support Program of Shandong
University of Technology. T. F. Wu was supported in part by the National
Science Council and the National Center for Theoretical Sciences (South),
Taiwan.}}
\date{}
\author{Juntao Sun \\
School of Science\\
Shandong University of Technology, Zibo, 255049, PR. China \\
sunjuntao2008@163.com \and Yi-hsin Cheng and Tsung-fang Wu \\
Department of Applied Mathematics \\
National University of Kaohsiung, Kaohsiung 811, Taiwan \\
d0984103@mail.nuk.edu.tw; tfwu@nuk.edu.tw }
\maketitle

\begin{abstract}
The purpose of this paper is to study the indefinite Kirchhoff type problem:
\begin{equation*}
\left\{
\begin{array}{ll}
M\left( \int_{\mathbb{R}^{N}}(|\nabla u|^{2}+u^{2})dx\right) \left[ -\Delta
u+u\right] =f(x,u) & \text{in }\mathbb{R}^{N}, \\
0\leq u\in H^{1}\left( \mathbb{R}^{N}\right) , &
\end{array}%
\right.
\end{equation*}%
where $N\geq 1$, $M(t)=am\left( t\right) +b$, $m\in C(\mathbb{R}^{+})$ and $%
f(x,u)=g(x,u)+h(x)u^{q-1}$. We require that $f$ is \textquotedblleft
local\textquotedblright\ sublinear at the origin and \textquotedblleft
local\textquotedblright\ linear at infinite. Using the mountain pass theorem
and Ekeland variational principle, the existence and multiplicity of
nontrivial solutions are obtained. In particular, the criterion of existence
of three nontrivial solutions is established.
\end{abstract}

\section{Introduction}

In this paper, we investigate the existence and multiplicity of nontrivial
solutions for a Kirchhoff type problem:
\begin{equation}
\left\{
\begin{array}{ll}
M\left( \int_{\mathbb{R}^{N}}|\nabla u|^{2}+u^{2})dx\right) \left[ -\Delta
u+u\right] =f(x,u) & \text{in }\mathbb{R}^{N}, \\
0\leq u\in H^{1}\left( \mathbb{R}^{N}\right) , &
\end{array}%
\right.  \tag*{$\left( K\right) $}
\end{equation}%
where $N\geq 1,$ $f\in C(\mathbb{R}^{N}\times \mathbb{R}^{+},\mathbb{R})$
and $M:\mathbb{R}\rightarrow \mathbb{R}$ is a given function whose
properties will be given later.

Problem $\left( K\right)$ is related to the stationary analogue of the
equation
\begin{equation*}
\rho\frac{\partial^{2}u}{\partial t^{2}}-\left( \frac{P_{0}}{h}+\frac{E}{2L}%
\int_{0}^{L}\left|\frac{\partial u}{\partial x}\right|^{2}dx\right) \frac{%
\partial^{2}u}{\partial x^{2}}=0,
\end{equation*}%
presented by Kirchhoff \cite{K} in 1883. This equation is an extension of
the classical d'Alembert's wave equation by considering the effects of the
changes in the length of the string during the vibrations. Such problems are
often referred to as being nonlocal because of the presence of the integral.
When $M\left( t\right) =at+b$ $\left( a,b>0\right) ,$ it is degenerate if $%
b=0$ and nondegenerate otherwise.

After Lions \cite{L} introduced an abstract framework to the Kirchhoff type
problem, Problem $\left( K\right)$ began to receive much attention. Most
researchers studied the Kirchhoff type problems on bounded domain $\Omega
\subset \mathbb{R}^{N}$ with the following version
\begin{equation}
\left\{
\begin{array}{ll}
-M\left( \int_{\Omega }|\nabla u|^{2}dx\right) \Delta u=f(x,u) & \text{ in }%
\Omega , \\
u=0 & \ \text{on }\partial \Omega.%
\end{array}%
\right.  \label{3}
\end{equation}%
For example, Bensedik and Bouchekif \cite{BB}, Chen et al. \cite{CKW}, Alves
et al. \cite{ACM} and Ma and Rivera \cite{MR}, using variational methods,
proved the existence and multiplicity of positive solutions while Zhang and
Perera \cite{ZP} obtained sign changing solutions via invariant sets of
descent flow. In particular, Alves et al. \cite{ACM} studied the conditions
of $M$ and $f$ that permit the existence of a positive solution and
concluded that this is possible if $M$ does not grow too fast in a suitable
interval near zero with $f$ being locally Lipschitz subject to some
prescribed criteria. Bensedik and Bouchekif \cite{BB} studied the
asymptotically linear case and obtained the existence of positive solutions
of Problem $\left( \ref{3}\right) $ when the function $M$ is a
non-decreasing function and $M\geq m_{0}$ for some $m_{0}>0$, and the
assumptions about the asymptotic behaviors of $f$ near zero and infinite are
the following

\begin{itemize}
\item[$\left( f_{1}\right) $] $t\longmapsto \frac{f\left( x,t\right) }{t}$
is a non-decreasing function for any fixed $x\in \overline{\Omega };$

\item[$\left( f_{2}\right) $] $\lim_{t\rightarrow 0}\frac{f\left( x,t\right)
}{t}=\overline{p}\left( x\right) $ and $\lim_{t\rightarrow \infty }\frac{%
f\left( x,t\right) }{t}=\overline{q}\left( x\right) $ uniformly in $x\in
\Omega ,$ where $0\leq \overline{p}\left( x\right) ,\overline{q}\left(
x\right) \in L^{\infty }\left( \Omega \right) $ and $\sup_{x\in \Omega }%
\overline{p}\left( x\right) <m_{0}\lambda _{1},$ $\lambda _{1}$ is the first
eigenvalue of $\left( -\Delta ,H_{0}^{1}\left( \Omega \right) \right) .$
\end{itemize}

Compared with the case of the bounded domain $\Omega \subset \mathbb{R}^{N}$%
, the case of the whole space $\mathbb{R}^{N}$ has been considered by a few
authors, see \cite{ADP, CL2, HZ, JW, LLS, SW, W, WTXZ, YT}, and the
references therein. More precisely, Li et al. \cite{LLS} considered the
following Kirchhoff type problem:%
\begin{equation}
\begin{array}{ll}
\left( a+\lambda \int_{\mathbb{R}^{N}}\left( |\nabla u|^{2}+bu^{2}\right)
dx\right) \left[ -\Delta u+bu\right] =f(u) & \text{in }\mathbb{R}^{N},%
\end{array}
\label{4}
\end{equation}%
where $N\geq 3$, $a$ and $b$ are positive constants, and $\lambda \geq 0$ is
a parameter. Under the weaker assumption $\lim_{t\rightarrow \infty }\frac{%
f(t)}{t}=\infty $, a positive radial solution of Equation $\left( \ref{4}%
\right) $ was constructed by applying a monotonicity trick of Jeanjean \cite%
{J} whenever $\lambda \geq 0$ small enough. He and Zou \cite{HZ} studied the
multiplicity and concentration behavior of positive solutions for the
following Kirchhoff type problem:%
\begin{equation}
\left\{
\begin{array}{ll}
-\left( \varepsilon ^{2}a+\varepsilon b\int_{\mathbb{R}^{N}}|\nabla
u|dx\right) \Delta u+V\left( x\right) u=f(u) & \text{in }\mathbb{R}^{N}, \\
0<u\in H^{1}\left( \mathbb{R}^{N}\right) , &
\end{array}%
\right.  \label{5}
\end{equation}%
where $\varepsilon >0$ is a parameter, $a,b>0$ are constants, and $f$ is a
continuous superlinear and subcritical nonlinear term. When $V$ has at least
one minimum, the authors proved that Equation $\left( \ref{5}\right) $ has a
ground state solution for $\varepsilon >0$ sufficiently small. Moreover,
they investigated the relation between the number of positive solutions and
the topology of the set of the global minima of the potentials by using
minimax theorems together with the Ljusternik-Schnirelmann theory.

Inspired by the above facts, the aim of this paper is to consider the
indefinite Kirchhoff type equations with local sublinearity and linearity.
To the author's knowledge, this case seems to be considered by few authors.
We mainly study the existence and multiplicity of nontrivial solutions for
Problem $(K)$. Furthermore, the non-existence of nontrivial solutions are
also discussed. In this paper, we consider the following Kirchhoff type
problem:%
\begin{equation}
\left\{
\begin{array}{ll}
M\left( \int_{\mathbb{R}^{N}}\left( |\nabla u|^{2}+u^{2}\right) dx\right) %
\left[ -\Delta u+u\right] =g(x,u)+h\left( x\right) u^{q-1} & \text{in }%
\mathbb{R}^{N}, \\
0\leq u\in H^{1}\left( \mathbb{R}^{N}\right) , &
\end{array}%
\right.  \tag*{$\left( K_{a,h}\right) $}
\end{equation}%
where $1<q<2,h\in L^{2/\left( 2-q\right) }\left( \mathbb{R}^{N}\right) ,\
M(t)=am\left( t\right) +b,$ the parameters $a,b>0$ and $m$ is a continuous
function on $\mathbb{R}^{+}$ such that $m\left( t\right) \geq 0$ for all $%
t>0.$ We assume that the function $g$ satisfies the following conditions:

\begin{itemize}
\item[$\left( D_{1}\right) $] $g\left( x,s\right) $ is a continuous function
on $\mathbb{R}^{N}\times \mathbb{R}$ such that $g(x,s)\equiv 0$ for all $s<0$
and $x\in \mathbb{R}^{N}.$ Moreover, there exists $p_{1}\in L^{\infty
}\left( \mathbb{R}^{N}\right) $ with $p_{1}^{+}\not\equiv 0$ such that
\begin{equation*}
\frac{g\left( x,s\right) }{s}\geq p_{1}\text{ for all }s>0\text{ and }x\in
\mathbb{R}^{N}
\end{equation*}%
and%
\begin{equation*}
\lim_{s\rightarrow 0^{+}}\frac{g\left( x,s\right) }{s}=p_{1}\text{ uniformly
for }x\in \mathbb{R}^{N},
\end{equation*}
where $p_{1}^{+}=\sup \left\{ p_{1},0\right\} ;$

\item[$\left( D_{2}\right) $] there exists $p_{2}\in L^{\infty }\left(
\mathbb{R}^{N}\right) $ with $p_{2}^{+}\not\equiv 0$ such that $%
\lim_{s\rightarrow \infty }\frac{g\left( x,s\right) }{s}=p_{2}\left(
x\right) $ uniformly for $x\in \mathbb{R}^{N},$ where $p_{2}^{+}=\sup
\left\{ p_{2},0\right\} ;$

\item[$\left( D_{3}\right) $] $\left\vert p_{1}^{+}\right\vert _{\infty }<b<%
\frac{1}{\mu ^{\ast }},$ where
\begin{equation}
\mu ^{\ast }:=\inf \left\{ \int_{\mathbb{R}^{N}}(|\nabla
u|^{2}+u^{2})dx:u\in H^{1}(\mathbb{R}^{N}),\int_{\mathbb{R}%
^{N}}p_{2}(x)u^{2}dx=1\right\} ;  \label{2}
\end{equation}

\item[$\left( D_{4}\right) $] there exists $R_{0}>0$ such that
\begin{equation*}
\sup \{\frac{g(x,s)}{s}:s>0\}\leq \min \left\{ 1,b\right\} \text{ uniformly
on }\left\vert x\right\vert \geq R_{0}.
\end{equation*}
\end{itemize}

\begin{remark}
By conditions $\left( D_{1}\right) $ and $\left( D_{2}\right) ,$ the
nonlinear term $f\left( x,s\right) :=g(x,s)+h\left( x\right) s^{q-1}$ for $%
s>0$ is \textquotedblleft local\textquotedblright\ sublinear at the origin
and \textquotedblleft local\textquotedblright\ linear at infinite, i.e.%
\begin{equation*}
\lim_{s\rightarrow 0^{+}}\frac{f\left( x,s\right) }{s^{q-1}}=h^{+}\left(
x\right) \text{ uniformly for }x\in \Omega _{h}^{+}:=\left\{ x\in \mathbb{R}%
^{N}:h\left( x\right) >0\right\}
\end{equation*}%
and%
\begin{equation*}
\lim_{s\rightarrow \infty }\frac{f\left( x,s\right) }{s}=p_{2}^{+}\left(
x\right) \text{ uniformly for }x\in \Omega _{p_{2}}^{+}:=\left\{ x\in
\mathbb{R}^{N}:p_{2}\left( x\right) >0\right\} ,
\end{equation*}%
where $p^{+}=\sup \left\{ p,0\right\} .$
\end{remark}

It is well known that Equation $(K_{a,h})$ is variational and its solutions
are the critical points of the functional defined in $H^{1}(\mathbb{R}^{N})$
by
\begin{equation*}
I_{a,h}(u)=\frac{a}{2}\widehat{m}\left( \Vert u\Vert ^{2}\right) +\frac{b}{2}%
\Vert u\Vert ^{2}-\int_{\mathbb{R}^{N}}G(x,u)dx-\int_{\mathbb{R}%
^{N}}h\left\vert u^{+}\right\vert ^{q}dx,
\end{equation*}%
where $\widehat{m}\left( t\right) =\int_{0}^{t}m\left( s\right) ds$, $%
\left\Vert u\right\Vert =\left( \int_{\mathbb{R}^{N}}\left( |\nabla
u|^{2}+u^{2}\right) dx\right) ^{1/2}$ is a standard norm in $H^{1}\left(
\mathbb{R}^{N}\right) $, $G(x,u)=\int_{0}^{u}g(x,s)ds$ and $u^{+}=\sup
\left\{ u,0\right\} .$ Furthermore, it is easy to prove that the functional $%
I_{a,h}$ is of class $C^{1}$ in $H^{1}\left( \mathbb{R}^{N}\right) $, and
that
\begin{eqnarray*}
\langle I_{a,h}^{\prime }(u),v\rangle &=&\left[ am\left( \Vert u\Vert
^{2}\right) +b\right] \int_{\mathbb{R}^{N}}(\nabla u\cdot \nabla
v+uv)dx-\int_{\mathbb{R}^{N}}g(x,u)vdx \\
&&-\int_{\mathbb{R}^{N}}h\left\vert u^{+}\right\vert ^{q-2}u^{+}vdx.
\end{eqnarray*}%
Hence if $u\in H^{1}(\mathbb{R}^{N})$ is a nonzero critical point of $%
I_{a,h} $, then $u$ is a nontrivial solution of Equation $(K_{a,h}).$

Before stating our result we need to introduce some notations and
definitions.

\begin{notation}
Throughout this paper, we denote by $\left\vert \cdot \right\vert _{r}$ the $%
L^{r}$-norm, $2\leq r\leq \infty $ and $B_{r}:=\left\{ u\in H^{1}\left(
\mathbb{R}^{N}\right) :\left\Vert u\right\Vert <r\right\} $ is an open ball
in $H^{1}\left( \mathbb{R}^{N}\right) .$ The letter $C$ will denote various
positive constants whose value may change from line to line but are not
essential to the analysis of the problem. Also if we take a subsequence of a
sequence $\left\{ u_{n}\right\} $ we shall denote it again $\left\{
u_{n}\right\} .$ We use $o\left( 1\right) $ to denote any quantity which
tends to zero when $n\rightarrow \infty .$
\end{notation}

\begin{definition}
$u$ is a ground state of Equation $(K_{a,h})$ we mean that $u$ is such a
solution of Equation $(K_{a,h})$ which has the least energy among all
nontrivial solutions of Equation $(K_{a,h}).$
\end{definition}

We also need the following assumptions:

\begin{itemize}
\item[$\left( D_{5}\right) $] $m\left( t\right) \rightarrow +\infty $ as $%
t\rightarrow \infty ;$

\item[$\left( D_{6}\right) $] there exist $\delta _{0},d_{0}>0$ such that $%
m\left( t\right) \geq d_{0}t^{^{\delta _{0}}}$ for all $t>0.$
\end{itemize}

Now, we give our main results.

\begin{theorem}
\label{t1-1}$\left( i\right) $ Suppose that conditions ${(D_{1})}-{(D_{5})}$
hold. If $h\equiv 0,$ then there exists $a^{\ast }>0$ such that for every $%
a\in \left( 0,a^{\ast }\right) ,$ Equation $\left( K_{a,h}\right) $ has one
nontrivial solution $u_{0}^{+}$ with $I_{a,h}\left( u_{0}^{+}\right) >0.$%
\newline
$\left( ii\right) $ Suppose that conditions ${(D_{1})}-{(D_{4})}$ and $%
\left( D_{6}\right) $ hold. If $h\equiv 0,$ then there exists $a^{\ast \ast
}>0$ such that for every $a\in \left( 0,a^{\ast \ast }\right) ,$ Equation $%
\left( K_{a,h}\right) $ has two nontrivial solutions $u_{0}^{-}$ and $%
u_{0}^{+}$ with $I_{a,h}\left( u_{0}^{-}\right) <0<I_{a,h}\left(
u_{0}^{+}\right) ,$ and $u_{0}^{-}$ is a ground state solution.
\end{theorem}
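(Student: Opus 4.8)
The plan is to treat the two parts by a mountain-pass / Ekeland splitting of the functional $I_{a,h}$ (which, when $h\equiv 0$, reduces to $\frac{a}{2}\widehat m(\|u\|^2)+\frac{b}{2}\|u\|^2-\int_{\mathbb{R}^N}G(x,u)\,dx$). For part $(i)$, I would first show that $I_{a,h}$ has the mountain-pass geometry: near the origin, using $(D_1)$ the term $\int G(x,u)\,dx$ behaves like $\frac{1}{2}\int p_1 u^2$, and since $|p_1^+|_\infty<b$ by $(D_3)$ we get $\frac{b}{2}\|u\|^2-\int G(x,u)\,dx\geq c\|u\|^2$ on a small sphere $\|u\|=\rho$, while the nonnegative Kirchhoff term $\frac{a}{2}\widehat m$ only helps; hence $I_{a,h}\geq\alpha>0$ there. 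For the mountain-pass altitude, I would use $(D_2)$ and $(D_3)$: pick $\varphi\in H^1$ with $\int p_2\varphi^2=1$ nearly attaining $\mu^*$, so that along $t\varphi$ the $G$-integral grows like $\frac{t^2}{2}\int p_2\varphi^2=\frac{t^2}{2}$ while $\frac{b}{2}\|t\varphi\|^2$ is asymptotically $\frac{b\mu^*}{2}t^2<\frac{t^2}{2}$ since $b\mu^*<1$; the only obstacle is the Kirchhoff term $\frac{a}{2}\widehat m(t^2\|\varphi\|^2)$, and this is exactly where $a$ small is needed — for $a$ small on a bounded range of $t$ that term is negligible, so $I_{a,h}(t_0\varphi)<0$ for some fixed large $t_0$ and all $a<a^*$. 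This defines the mountain-pass level $c_a>0$.

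Next I would verify a Palais–Smale (or Cerami) condition at level $c_a$. Let $\{u_n\}$ be a (C)$_{c_a}$ sequence. Boundedness is the crux: from $\langle I'_{a,h}(u_n),u_n\rangle=o(\|u_n\|)$ and $I_{a,h}(u_n)\to c_a$, and using $(D_5)$, i.e. $m(t)\to\infty$, one sees that if $\|u_n\|\to\infty$ then $am(\|u_n\|^2)\to\infty$, forcing the coefficient $am(\|u_n\|^2)+b$ to blow up; comparing with the at-most-linear growth of $g$ from $(D_2)$ (so $\int g(x,u_n)u_n\,dx\leq C\|u_n\|^2$) yields a contradiction after dividing by $\|u_n\|^2$. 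So $\{u_n\}$ is bounded; passing to a weak limit $u_0$, the compactness issue on $\mathbb{R}^N$ is handled by $(D_4)$: outside $B_{R_0}$ the nonlinearity $g(x,s)/s\leq\min\{1,b\}$ so the "bad" part of $\int g(x,u_n)(u_n-u_0)$ is controlled by $\min\{1,b\}\|u_n-u_0\|^2$, which can be absorbed into the quadratic form $(am(\cdot)+b)\|u_n-u_0\|^2\geq b\|u_n-u_0\|^2$, while inside $B_{R_0}$ the Rellich embedding $H^1(B_{R_0})\hookrightarrow L^2(B_{R_0})$ is compact; together with the continuity of $m$ this gives $\|u_n-u_0\|\to 0$. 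The mountain-pass theorem then produces a critical point $u^+_0$ with $I_{a,h}(u^+_0)=c_a>0$, hence nontrivial; since $g(x,s)\equiv0$ for $s<0$ a standard argument (test with $u^-$) shows $u^+_0\geq0$.

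For part $(ii)$, I would keep the mountain-pass solution $u^+_0$ from part $(i)$ — note $(D_1)$–$(D_4)$ still hold and one still obtains $a^{**}>0$ so that the geometry and compactness work on $(0,a^{**})$, with $(D_6)$ replacing the role of $(D_5)$ (the polynomial lower bound $m(t)\geq d_0 t^{\delta_0}$ again forces the Kirchhoff coefficient to blow up along any unbounded (C)-sequence, giving boundedness; in fact $(D_6)$ is a quantitative strengthening that also lets one bound $a^{**}$ explicitly). The second solution $u^-_0$ is found by Ekeland's variational principle in the ball $\overline{B_\rho}$: since $I_{a,h}(0)=0$ and, by testing with a small multiple of the function $\varphi_1$ associated to $p_1^+$ (where $\int p_1 u^2$ exceeds the quadratic term — this is possible precisely because $p_1^+\not\equiv0$), one shows $m_0:=\inf_{\overline{B_\rho}}I_{a,h}<0$; Ekeland yields a minimizing (PS) sequence inside $B_\rho$, which by the same compactness as above converges to a critical point $u^-_0$ with $I_{a,h}(u^-_0)=m_0<0<c_a=I_{a,h}(u^+_0)$, so the two solutions are distinct and nontrivial, and $u^-_0\geq0$ by the same sign argument. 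Finally, that $u^-_0$ is a ground state follows because any nontrivial solution $u$ satisfies $\langle I'_{a,h}(u),u\rangle=0$, which (using $|p_1^+|_\infty<b$) forces $\|u\|$ to be bounded below by a fixed $\rho_0\leq\rho$, and then a short computation shows $I_{a,h}(u)\geq$ (something), but more simply: one shows the least energy over all nontrivial solutions is $\leq m_0<0$ by comparison, and every nontrivial solution with negative energy must lie in $\overline{B_\rho}$ where $I_{a,h}\geq m_0$; hence $m_0$ is attained and $u^-_0$ realizes it. I expect the main obstacle throughout to be the boundedness of Cerami sequences: the Kirchhoff term helps only when $m$ grows ($(D_5)$ or $(D_6)$), but one must carefully balance it against the linear growth of $g$ at infinity and the indefinite sign of $p_2$, and ensure the smallness $a<a^*$ is used only for the mountain-pass altitude, not for compactness.
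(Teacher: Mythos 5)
Your part $(i)$ follows the paper's route (mountain-pass geometry from $(D_{1})$, $(D_{3})$ near the origin and from $(D_{2})$, $b\mu ^{\ast }<1$ plus $a$ small for the altitude; boundedness of the Cerami sequence from $(D_{5})$; compactness from $(D_{4})$ and local compact embedding), and is essentially correct. But part $(ii)$ contains a genuine error: you place the negative local minimum inside the small ball $\overline{B}_{\rho }$ and propose to get $\inf_{\overline{B}_{\rho }}I_{a,0}<0$ by testing with a small multiple of a function on which ``$\int p_{1}u^{2}$ exceeds the quadratic term''. This is impossible when $h\equiv 0$: by $(D_{3})$ one has $\int_{\mathbb{R}^{N}}p_{1}u^{2}dx\leq \left\vert p_{1}^{+}\right\vert _{\infty }\left\vert u\right\vert _{2}^{2}<b\Vert u\Vert ^{2}$, and indeed Lemma \ref{lem1} shows $I_{a,0}(u)>0$ for all $u\in B_{\rho _{0}}\backslash \{0\}$ with $\inf_{B_{\rho _{0}}}I_{a,0}=0$, so Ekeland in the small ball returns only $u=0$. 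The negativity of the infimum comes from infinity, not from the origin: Lemma \ref{lem2} gives $I_{a,0}(e)<0$ for some $e$ with $\Vert e\Vert >\rho $, using $(D_{2})$ and $b\mu ^{\ast }<1$ together with $a$ small --- so the smallness of $a$ is needed for the negative minimum as well, contrary to your closing remark. One then needs $(D_{6})$ (choosing the subcritical exponent with $2<r<2+2\delta _{0}$) to show, as in Lemma \ref{lem5}, that $I_{a,0}$ is bounded below and strictly positive outside a \emph{large} ball $B_{R_{h}}$, so that $\inf_{H^{1}}I_{a,0}=\inf_{\overline{B}_{R_{h}}}I_{a,0}<0$ and Ekeland applies there; the ground-state property is then immediate because $u_{0}^{-}$ minimizes $I_{a,0}$ over all of $H^{1}(\mathbb{R}^{N})$, not via your containment claim (nontrivial solutions of negative energy in fact lie \emph{outside} $B_{\rho _{0}}$).

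A secondary weak point is the compactness step. Absorbing the tail of $\int g(x,u_{n})(u_{n}-u_{0})dx$ into $\left( am\left( \Vert u_{n}\Vert ^{2}\right) +b\right) \Vert u_{n}-u_{0}\Vert ^{2}$ using only $g(x,s)/s\leq \min \{1,b\}$ leaves, when $b\leq 1$, a residual coefficient $am\left( \Vert u_{n}\Vert ^{2}\right) $ on the $L^{2}$ part, and under $(D_{5})$ alone this need not be bounded away from zero since only $m\geq 0$ is assumed; moreover the tail bound is really $\min \{1,b\}\left( \int_{|x|\geq R}u_{n}^{2}\right) ^{1/2}\left( \int_{|x|\geq R}|u_{n}-u_{0}|^{2}\right) ^{1/2}$, not $\min \{1,b\}\Vert u_{n}-u_{0}\Vert ^{2}$. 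The paper instead first proves a uniform decay estimate $\int_{|x|\geq R}\left( |\nabla u_{n}|^{2}+u_{n}^{2}\right) dx\leq \epsilon $ for the bounded Palais--Smale sequence by testing with $u_{n}\xi _{R}$ (Lemma \ref{lem4-1}), and only then passes to the limit in the nonlinear term, dividing at the end by $am+b\geq b>0$; you should insert that intermediate step rather than relying on direct absorption.
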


\begin{theorem}
\label{t1-2}$\left( i\right) $ Suppose that conditions ${(D_{1})}-{(D_{4})}$
hold. Then there exists $\Lambda _{0}>0$ such that for every $a>0$ and $h\in
L^{2/\left( 2-q\right) }\left( \mathbb{R}^{N}\right) $ with $0<\left\vert
h^{+}\right\vert _{L^{2/\left( 2-q\right) }}<\Lambda _{0},$ Equation $\left(
K_{a,h}\right) $ has one nontrivial solution $u_{h,1}^{-}$ with $%
I_{a,h}\left( u_{h,1}^{-}\right) <0.$\newline
$\left( ii\right) $ Suppose that conditions ${(D_{1})}-{(D_{5})}$ hold. Then
there exist $a^{\ast },\Lambda _{0}>0$ such that for every $a\in \left(
0,a^{\ast }\right) $ and $h\in L^{2/\left( 2-q\right) }\left( \mathbb{R}%
^{N}\right) $ with $0<\left\vert h^{+}\right\vert _{L^{2/\left( 2-q\right)
}}<\Lambda _{0},$ Equation $\left( K_{a,h}\right) $ has two nontrivial
solutions $u_{h,1}^{-}$ and $u_{h}^{+}$ with $I_{a,h}\left(
u_{h,1}^{-}\right) <0<I_{a,h}\left( u_{h}^{+}\right) .$\newline
$\left( iii\right) $ Suppose that conditions ${(D_{1})}-{(D_{4})}$ and $%
\left( D_{6}\right) $ hold. Then there exist $\overline{a}_{0},\overline{%
\Lambda }_{0}>0$ such that for every $a\in \left( 0,\overline{a}_{0}\right) $
and $h\in L^{2/\left( 2-q\right) }\left( \mathbb{R}^{N}\right) $ with $h\geq
0$ and $0<\left\vert h\right\vert _{L^{2/\left( 2-q\right) }}<\overline{%
\Lambda }_{0},$ Equation $\left( K_{a,h}\right) $ has three nontrivial
solutions $u_{h,1}^{-},u_{h,2}^{-}$ and $u_{h}^{+}$ with
\begin{equation*}
I_{a,h}\left( u_{h,2}^{-}\right) <I_{a,h}\left( u_{h,1}^{-}\right)
<0<I_{a,h}\left( u_{h}^{+}\right) ,
\end{equation*}%
and $u_{h,2}^{-}$ is a ground state solution.
\end{theorem}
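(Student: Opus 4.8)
\medskip

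\noindent\textbf{Proof proposal.} The plan is to treat $I_{a,h}$ as a perturbation of the quadratic functional $u\mapsto\tfrac{b}{2}\|u\|^{2}-\int_{\mathbb{R}^{N}}G(x,u)\,dx$, which by $(D_{1})$ is positive definite near the origin (as $G(x,s)\leq\tfrac12 p_{1}(x)s^{2}+o(s^{2})$ and $|p_{1}^{+}|_{\infty}<b$), while by $(D_{2})$--$(D_{3})$ it takes negative values along a near-minimizer of $\mu^{\ast}$ (since $b\mu^{\ast}<1$), and the sublinear term $-\int_{\mathbb{R}^{N}}h|u^{+}|^{q}\,dx$ with $1<q<2$ drags $I_{a,h}$ strictly below $I_{a,h}(0)=0$ whenever $h^{+}\not\equiv0$. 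First I would record, under $(D_{1})$--$(D_{4})$ and for $0<|h^{+}|_{L^{2/(2-q)}}<\Lambda_{0}$ with $\Lambda_{0}$ small, two geometric facts: \textbf{(G1)} there are $\rho_{0},\alpha>0$ with $I_{a,h}(u)\geq\alpha$ on $\{\|u\|=\rho_{0}\}$ and $\inf_{\overline{B_{\rho_{0}}}}I_{a,h}\geq-C|h^{+}|_{L^{2/(2-q)}}^{2/(2-q)}$, which follows from the truncation estimate $\int G(x,u)\leq\tfrac12(|p_{1}^{+}|_{\infty}+\varepsilon)\|u\|^{2}+C_{\varepsilon}\|u\|^{r}$ for a fixed subcritical $r>2$ (using the linear growth of $g$ from $(D_{2})$) together with $\int h|u^{+}|^{q}\leq|h^{+}|_{L^{2/(2-q)}}\|u\|^{q}$ (Hölder) and then optimising in $\rho$; and \textbf{(G2)} $\inf_{\overline{B_{\rho_{0}}}}I_{a,h}<0$, by evaluating $I_{a,h}(tu_{\ast})$ as $t\to0^{+}$ for some $0\leq u_{\ast}$ with $\mathrm{supp}\,u_{\ast}\subset\Omega_{h}^{+}$, where the $-t^{q}$ term dominates the $O(t^{2})$ terms.

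\medskip
\noindent For \textbf{part (i)}: $\overline{B_{\rho_{0}}}$ is weakly compact and $I_{a,h}$ is weakly sequentially lower semicontinuous (split $\int G(x,u_{n})$ by the Brezis--Lieb lemma, bound the lost-mass piece by $\tfrac{b}{2}\|u_{n}-u\|^{2}$ using $(D_{4})$, and use that $\widehat m$ is nondecreasing and $u\mapsto\int h|u^{+}|^{q}$ is weakly continuous); hence $c_{1}:=\inf_{\overline{B_{\rho_{0}}}}I_{a,h}$ is attained at some $u_{h,1}^{-}$, which by (G1)--(G2) satisfies $c_{1}<0<\alpha=\inf_{\partial B_{\rho_{0}}}I_{a,h}$, so $u_{h,1}^{-}$ is an interior point and thus a critical point of $I_{a,h}$. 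Writing $u:=u_{h,1}^{-}=u^{+}-u^{-}$ and testing $\langle I_{a,h}'(u),u^{-}\rangle=0$, the fact that $g(x,s)\equiv0$ for $s<0$ gives $[am(\|u\|^{2})+b]\|u^{-}\|^{2}=0$, so $u^{-}=0$ and $u_{h,1}^{-}\geq0$ solves $(K_{a,h})$, while $c_{1}<0$ forces $u_{h,1}^{-}\not\equiv0$. (Alternatively, run Ekeland's variational principle on $\overline{B_{\rho_{0}}}$ and pass to the limit via the compactness step below.)

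\medskip
\noindent For \textbf{parts (ii) and (iii)}: under $(D_{5})$ (resp. $(D_{6})$) the primitive $\widehat m$ is superlinear, which with the linear growth of $g$ and Hölder's inequality for the $h$-term makes $I_{a,h}$ coercive and bounded below on $H^{1}(\mathbb{R}^{N})$. Choosing $0\leq u_{0}$ near-minimizing $\mu^{\ast}$ (so $b\|u_{0}\|^{2}<1$), $(D_{2})$ gives $I_{a,h}(tu_{0})=\tfrac{a}{2}\widehat m(t^{2}\|u_{0}\|^{2})+\tfrac{t^{2}}{2}(b\|u_{0}\|^{2}-1)+o(t^{2})$ as $t\to\infty$; fixing $T$ large with $\tfrac{T^{2}}{2}(b\|u_{0}\|^{2}-1)+o(T^{2})\leq-1$ and then $a<a^{\ast}$ so small that $\tfrac{a}{2}\widehat m(T^{2}\|u_{0}\|^{2})\leq\tfrac12$, the point $e:=Tu_{0}$ has $\|e\|>\rho_{0}$ and $I_{a,h}(e)\leq-\tfrac12$. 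Since every path from $0$ to $e$ meets $\{\|u\|=\rho_{0}\}$, the mountain pass theorem (with the $(PS)$ condition from the final step) yields a critical point $u_{h}^{+}\geq0$, $u_{h}^{+}\not\equiv0$, with $I_{a,h}(u_{h}^{+})\geq\alpha>0$; together with part (i) this proves (ii). For (iii), add $h\geq0$, $(D_{6})$, and $a$ small: coercivity plus weak lower semicontinuity give a global minimizer $u_{h,2}^{-}$ with $I_{a,h}(u_{h,2}^{-})=\inf_{H^{1}(\mathbb{R}^{N})}I_{a,h}=:c_{0}<0$, which is a nontrivial nonnegative solution and, being a global minimizer, a ground state. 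Finally, shrinking $\overline{\Lambda}_{0}$ so that (by (G1)) $c_{1}\geq-C|h|_{L^{2/(2-q)}}^{2/(2-q)}>-1$ while $I_{a,h}(e)\leq-1$ for $a<\overline{a}_{0}$, we get $c_{0}\leq I_{a,h}(e)\leq-1<c_{1}=\inf_{\overline{B_{\rho_{0}}}}I_{a,h}$, so $u_{h,2}^{-}\notin\overline{B_{\rho_{0}}}$; in particular $u_{h,1}^{-},u_{h,2}^{-},u_{h}^{+}$ are three distinct nontrivial solutions with $I_{a,h}(u_{h,2}^{-})<I_{a,h}(u_{h,1}^{-})<0<I_{a,h}(u_{h}^{+})$.

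\medskip
\noindent The step I expect to be the main obstacle is compactness: showing that every bounded $\{u_{n}\}\subset H^{1}(\mathbb{R}^{N})$ with $I_{a,h}(u_{n})$ bounded and $I_{a,h}'(u_{n})\to0$ has a strongly convergent subsequence, despite the non-compact embedding $H^{1}(\mathbb{R}^{N})\hookrightarrow L^{2}(\mathbb{R}^{N})$. Passing to a subsequence, $u_{n}\rightharpoonup u$, $u_{n}\to u$ in $L^{2}_{\mathrm{loc}}$ and a.e., and $\|u_{n}\|^{2}\to t_{0}$ so $m(\|u_{n}\|^{2})\to m(t_{0})$. The key is a tightness estimate: testing $I_{a,h}'(u_{n})$ against $\psi_{R}u_{n}$, where $\psi_{R}$ is a cut-off vanishing on $B_{R}$ and equal to $1$ off $B_{2R}$, and using $(D_{4})$ (so $g(x,u_{n})u_{n}\leq\min\{1,b\}u_{n}^{2}$ for $|x|\geq R_{0}$) together with the smallness of the tail of $h$ in $L^{2/(2-q)}$, one obtains for $R\geq R_{0}$
\[
\bigl[am(\|u_{n}\|^{2})+b-\min\{1,b\}\bigr]\int_{|x|\geq 2R}(|\nabla u_{n}|^{2}+u_{n}^{2})\,dx\leq\varepsilon+o_{R}(1)+o_{n}(1),
\]
where the bracket is $\geq0$ since $b-\min\{1,b\}=\max\{b-1,0\}$, and is bounded below by a positive constant when $b>1$ or when $m$ is bounded away from $0$ along $\{u_{n}\}$ (e.g. under $(D_{6})$ at a positive energy level); hence the tail of $u_{n}$ vanishes uniformly and, with the local compact embedding, $u_{n}\to u$ in $L^{2}(\mathbb{R}^{N})$. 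Feeding this into $\langle I_{a,h}'(u_{n}),u_{n}-u\rangle\to0$—where the $h$-term drops out by Hölder ($1<q<2$) and the $g$-term by $L^{2}$-convergence and linear growth—and using $am(\|u_{n}\|^{2})+b\geq b>0$ yields $\|u_{n}-u\|\to0$. Handling the nonlocal factor $m(\|u_{n}\|^{2})$ correctly, and in the borderline case $b\leq1$ with $m$ vanishing near $t_{0}$ ruling out loss of mass by a finer argument (exploiting the precise form of $(D_{4})$, the Brezis--Lieb splitting of $\int G$, and the sign of $h$ in (iii)), is where the real work lies.
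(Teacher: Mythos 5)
Your overall architecture is sound and in places genuinely different from the paper's: for the negative-level solutions you minimize directly on the closed ball (resp.\ on all of $H^{1}$) using weak lower semicontinuity, where the tail loss in $\int G$ is absorbed by the $\tfrac{b}{2}\|u_{n}-u\|^{2}$ gain thanks to $(D_{4})$ — this bypasses the strong-convergence machinery that the paper runs through Ekeland plus its compactness lemma, and it works for every $b>0$; your coercivity claim under $(D_{5})$ (via the global bound $G(x,s)\leq \tfrac{C}{2}s^{2}$ and $\widehat m(t)/t\to\infty$) is a legitimate and simpler substitute for the paper's Cerami-boundedness lemma; and your quantitative separation $c_{0}\leq I_{a,h}(e)\leq -1 < c_{1}$ replaces the paper's comparison with the $h\equiv 0$ problem ($\widetilde\theta_{h}\leq\widetilde\theta_{0}<\theta_{h}$, using Theorem 1.1(ii) and $\theta_{h}\to 0$ as $|h|_{L^{2/(2-q)}}\to 0$).

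The genuine gap is exactly the step you flag and then defer: strong convergence of the mountain-pass Cerami sequence when $0<b\leq 1$, which is needed for $u_{h}^{+}$ in parts (ii) and (iii) and cannot be avoided by minimization. Your tail inequality carries the coefficient $am(\|u_{n}\|^{2})+b-\min\{1,b\}$ in front of the whole tail norm, and under $(D_{5})$ alone this can be identically zero along the sequence (e.g.\ $b<1$ and $m\equiv 0$ on the range of $\|u_{n}\|^{2}$), so the $L^{2}$-tail is not controlled and "a finer argument ... is where the real work lies" is not a proof. The paper's Lemma 3.1 closes this differently: the bound from $(D_{1})$, $(D_{4})$ is applied only against the $\int u_{n}^{2}\xi_{R}$ part (the gradient tail keeps its full coefficient $\geq b$), and with a margin $\eta_{i}\in(0,1)$, i.e.\ $\int g(x,u_{n})u_{n}\xi_{R}\,dx\leq \eta_{i}\min\{1,b\}\int u_{n}^{2}\xi_{R}\,dx$, so the $L^{2}$-tail coefficient is $b(1-\eta_{i})>0$ independently of $m$; this is the ingredient your write-up is missing for part (ii). (Your observation that under $(D_{6})$ at the positive level $\alpha>0$ the norms are bounded away from zero, so $am(\|u_{n}\|^{2})$ is bounded below and your bracket is positive, does rescue part (iii); but part (ii) is stated under $(D_{5})$ only, and there your compactness argument as written does not conclude.) Until that case is handled — either by importing the paper's margin estimate or by actually carrying out the finer concentration analysis you allude to — the proof of part (ii) is incomplete.
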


We now turn to example $m\left( t\right) =t$ for $t\geq 0.$

\begin{corollary}
Suppose that conditions ${(D_{1})}-{(D_{4})}$ hold and $m\left( t\right) =t$
for $t\geq 0.$ Then there exist $\overline{a}_{0},\overline{\Lambda }_{0}>0$
such that for every $a\in \left( 0,\overline{a}_{0}\right) $ and $h\in
L^{2/\left( 2-q\right) }\left( \mathbb{R}^{N}\right) $ with $h\geq 0$ and $%
0<\left\vert h\right\vert _{L^{2/\left( 2-q\right) }}<\overline{\Lambda }%
_{0},$ Equation $\left( K_{a,h}\right) $ has three nontrivial solutions $%
u_{h,1}^{-},u_{h,2}^{-}$ and $u_{h}^{+}$ with
\begin{equation*}
I_{a,h}\left( u_{h,2}^{-}\right) <I_{a,h}\left( u_{h,1}^{-}\right)
<0<I_{a,h}\left( u_{h}^{+}\right) ,
\end{equation*}%
and $u_{h,2}^{-}$ is a ground state solution.
\end{corollary}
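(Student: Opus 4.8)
The statement is nothing more than the specialization of Theorem~\ref{t1-2}$(iii)$ to the particular choice $m(t)=t$, so the plan is simply to check that this $m$ falls under the hypotheses of that theorem and then quote it. First I would record that $m(t)=t$ is continuous on $\mathbb{R}^{+}$ and satisfies $m(t)\geq 0$ for all $t>0$, which are the standing assumptions on $m$ made in the definition of Problem $(K_{a,h})$; moreover $\widehat{m}(t)=\int_{0}^{t}m(s)\,ds=t^{2}/2$, so that $M(t)=at+b$ is precisely the classical (nondegenerate) Kirchhoff term and the associated functional becomes $I_{a,h}(u)=\frac{a}{4}\|u\|^{4}+\frac{b}{2}\|u\|^{2}-\int_{\mathbb{R}^{N}}G(x,u)\,dx-\int_{\mathbb{R}^{N}}h\,|u^{+}|^{q}\,dx$.

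Next I would verify condition $(D_{6})$ for $m(t)=t$: taking $d_{0}=1$ and $\delta_{0}=1$ one has $m(t)=t=d_{0}t^{\delta_{0}}\geq d_{0}t^{\delta_{0}}$ for all $t>0$, so $(D_{6})$ holds trivially. Since conditions $(D_{1})$--$(D_{4})$ are assumed in the corollary, the full set of hypotheses $(D_{1})$--$(D_{4})$ together with $(D_{6})$ required by Theorem~\ref{t1-2}$(iii)$ is in force.

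Therefore I would conclude by applying Theorem~\ref{t1-2}$(iii)$ verbatim: there exist constants $\overline{a}_{0},\overline{\Lambda}_{0}>0$ (namely the $\overline{a}_{0},\overline{\Lambda}_{0}$ produced by that theorem for this $g$, $q$ and this choice of $m$) such that for every $a\in(0,\overline{a}_{0})$ and every $h\in L^{2/(2-q)}(\mathbb{R}^{N})$ with $h\geq 0$ and $0<|h|_{L^{2/(2-q)}}<\overline{\Lambda}_{0}$, Equation $(K_{a,h})$ has three nontrivial solutions $u_{h,1}^{-},u_{h,2}^{-},u_{h}^{+}$ obeying $I_{a,h}(u_{h,2}^{-})<I_{a,h}(u_{h,1}^{-})<0<I_{a,h}(u_{h}^{+})$ with $u_{h,2}^{-}$ a ground state. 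There is essentially no obstacle here: the only verification needed is that $m(t)=t$ satisfies $(D_{6})$, which is immediate, and all of the analytic work (the mountain pass geometry, the Ekeland variational principle arguments near the origin, and the compactness/Palais--Smale analysis on $\mathbb{R}^{N}$) is already contained in the proof of Theorem~\ref{t1-2}. If anything, the only point worth a sentence is making explicit that $\widehat{m}(t)=t^{2}/2$ so that the reader sees the corollary recovers the standard Kirchhoff nonlinearity $M(t)=at+b$.
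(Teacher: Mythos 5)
Your proposal is correct and follows exactly the route the paper intends: the corollary is stated immediately after Theorem \ref{t1-2} as the specialization $m(t)=t$, and the only thing to check is that this $m$ satisfies $(D_{6})$, which holds with $d_{0}=\delta_{0}=1$. Your verification of the standing assumptions on $m$ and the computation $\widehat{m}(t)=t^{2}/2$ are fine additional remarks but do not change the argument.
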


\begin{remark}
If the functions $p_{1},p_{2}$ and $h$ are nonnegative, then by the maximum
principle, all solutions of Theorems \ref{t1-1} and \ref{t1-2} are positive
ones of Equation $\left( K_{a,h}\right) .$
\end{remark}

On the non-existence of nontrivial solutions we have the following result.

\begin{theorem}
\label{t1-3}Suppose in addition to the condition ${(D_{2})}$ holds and $h\in
L^{2/\left( 2-q\right) }\left( \mathbb{R}^{N}\right) $, we also have\newline
$\left( D_{7}\right) \ s\longmapsto \frac{g\left( x,s\right) }{s}$ is
non-decreasing function for any fixed $x\in \mathbb{R}^{N};$\newline
$\left( D_{8}\right) $ $m\left( t\right) >\frac{1-b\mu ^{\ast }}{a\mu ^{\ast
}}+\frac{\left\vert h^{+}\right\vert _{L^{2/\left( 2-q\right) }}}{aS_{2}^{q}}%
t^{\left( q-2\right) /2}$ for all $t>0,$ where $\mu ^{\ast }>0$ is defined
in (\ref{2}).\newline
Then Equation $\left( K_{a,h}\right) $ does not admits any nontrivial
solution.
\end{theorem}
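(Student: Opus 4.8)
The plan is to argue by contradiction. Suppose $u\in H^1(\mathbb{R}^N)$ is a nontrivial solution of $(K_{a,h})$, i.e. $u\neq 0$ and $I_{a,h}'(u)=0$. Testing $\langle I_{a,h}'(u),v\rangle=0$ with $v=u$ and using the expression for $I_{a,h}'$ recorded above (together with $|u^+|^{q-2}u^+u=|u^+|^q$ a.e.), we obtain the fundamental identity
\begin{equation*}
\bigl[am(\|u\|^2)+b\bigr]\|u\|^2=\int_{\mathbb{R}^N}g(x,u)u\,dx+\int_{\mathbb{R}^N}h\,|u^+|^q\,dx .
\end{equation*}
The whole argument then reduces to estimating the two integrals on the right by $\|u\|^2/\mu^*$ and by $S_2^{-q}|h^+|_{L^{2/(2-q)}}\|u\|^q$ respectively, and checking that the resulting inequality is incompatible with $(D_8)$ at $t=\|u\|^2>0$.

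For the first integral I would first extract from $(D_7)$ and $(D_2)$ the \emph{global} pointwise bound $g(x,s)/s\le p_2(x)$ for every $x\in\mathbb{R}^N$ and every $s\neq 0$: since $s\mapsto g(x,s)/s$ is non-decreasing and tends to $p_2(x)$ as $s\to+\infty$, it must stay below $p_2(x)$ on all of $\mathbb{R}\setminus\{0\}$. Hence $\int_{\mathbb{R}^N}g(x,u)u\,dx\le\int_{\mathbb{R}^N}p_2(x)u^2\,dx$. Next I would invoke the variational characterization (\ref{2}) of $\mu^*$ in the homogeneous form $\int_{\mathbb{R}^N}p_2u^2\,dx\le\|u\|^2/\mu^*$, which holds for \emph{every} $u\in H^1(\mathbb{R}^N)$: it is trivial when $\int p_2u^2\,dx\le 0$ (note $\mu^*>0$, since $\int p_2u^2\,dx\le|p_2^+|_\infty\|u\|^2$ forces $\mu^*\ge|p_2^+|_\infty^{-1}$), and it follows by testing the infimum with $u/(\int p_2u^2\,dx)^{1/2}$ otherwise. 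Together these give $\int_{\mathbb{R}^N}g(x,u)u\,dx\le\|u\|^2/\mu^*$.

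For the second integral, Hölder's inequality with exponents $\tfrac{2}{2-q}$ and $\tfrac{2}{q}$ yields
\begin{equation*}
\int_{\mathbb{R}^N}h\,|u^+|^q\,dx\le\int_{\mathbb{R}^N}h^+|u^+|^q\,dx\le|h^+|_{L^{2/(2-q)}}\,|u^+|_2^{q},
\end{equation*}
and the Sobolev embedding $H^1(\mathbb{R}^N)\hookrightarrow L^2(\mathbb{R}^N)$ in the form $S_2|u|_2\le\|u\|$, together with $|u^+|_2\le|u|_2$, turns this into $\int_{\mathbb{R}^N}h\,|u^+|^q\,dx\le S_2^{-q}|h^+|_{L^{2/(2-q)}}\|u\|^q$. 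Substituting both estimates into the fundamental identity, dividing by $\|u\|^2>0$ and rearranging, we arrive at
\begin{equation*}
m(\|u\|^2)\le\frac{1-b\mu^*}{a\mu^*}+\frac{|h^+|_{L^{2/(2-q)}}}{aS_2^{q}}\bigl(\|u\|^2\bigr)^{(q-2)/2},
\end{equation*}
which contradicts $(D_8)$ evaluated at $t=\|u\|^2$. Therefore $(K_{a,h})$ admits no nontrivial solution.

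I do not expect any genuine obstacle here; the points that need a little care are: (a) the derivation of the \emph{pointwise} inequality $g(x,s)/s\le p_2(x)$, where it is the monotonicity hypothesis $(D_7)$ — not merely the asymptotics $(D_2)$ — that makes the estimate work; (b) the validity of $\int p_2u^2\,dx\le\|u\|^2/\mu^*$ for \emph{all} $u$, in particular those with $\int p_2u^2\,dx\le 0$; and (c) keeping the multiplicative constants in exact agreement with the (deliberately calibrated) threshold appearing in $(D_8)$. Finally, note that $u$ need not be assumed nonnegative a priori: the bound on $\int g(x,u)u\,dx$ holds for every sign of $u$, and the $h$-term only involves $u^+$.
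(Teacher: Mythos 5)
Your argument is correct and is essentially the paper's own proof: test $I_{a,h}'(u)=0$ with $u$, bound $\int g(x,u)u\,dx\le\int p_2u^2\,dx\le\|u\|^2/\mu^*$ via $(D_7)$, $(D_2)$ and the definition (\ref{2}) of $\mu^*$, bound the $h$-term by $S_2^{-q}|h^+|_{L^{2/(2-q)}}\|u\|^q$ via H\"older and Sobolev, and contradict $(D_8)$ at $t=\|u\|^2$; you merely make explicit the steps the paper compresses. The only caveat is your closing remark that the estimate on $\int g(x,u)u\,dx$ holds for sign-changing $u$: where $u<0$ one has $g(x,u)u=0$, which need not be $\le p_2(x)u^2$ if $p_2(x)<0$ there, but this is immaterial since solutions of $(K_{a,h})$ are by definition (and, testing with $u^-$, automatically) nonnegative.
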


The remainder of this paper is organized as follows. In Section 2, some
preliminary results are presented. In Section 3 and 4, we give the proofs of
Theorems \ref{t1-1} and \ref{t1-2}. In Section 5, we give the proof of
Theorem \ref{t1-3}.

\section{Preliminaries}

Throughout this paper, we denote by $S_{r}$ the best Sobolev constant for
the imbedding of $H^{1}\left( \mathbb{R}^{N}\right) $ in $L^{r}\left(
\mathbb{R}^{N}\right) $ with $2\leq r<2^{\ast }.$ In particular,
\begin{equation*}
\left\vert u\right\vert _{r}\leq S_{r}^{-1}\left\Vert u\right\Vert \text{
for all }u\in H^{1}\left( \mathbb{R}^{N}\right) \backslash \left\{ 0\right\}
.
\end{equation*}

Next, we give a useful theorem. It is the variant version of the mountain
pass theorem, which allows us to find a so-called Cerami type $(PS)$
sequence. The properties of this kind of $(PS)$ sequence are very helpful in
showing the boundedness of the sequence in the asymptotically linear case.

\begin{theorem}
\label{t2}(\cite{E2}, Mountain Pass Theorem). Let $E$ be a real Banach space
with its dual space $E^{\ast },$ and suppose that $I\in C^{1}(E,\mathbb{R})$
satisfies
\begin{equation*}
\max \{I(0),I(e)\}\leq \mu <\eta \leq \inf_{\Vert u\Vert =\rho }I(u),
\end{equation*}%
for some $\mu <\eta ,\rho >0$ and $e\in E$ with $\Vert e\Vert >\rho .$ Let $%
c\geq \eta $ be characterized by
\begin{equation*}
\alpha =\inf_{\gamma \in \Gamma }\max_{0\leq \tau \leq 1}I(\gamma (\tau )),
\end{equation*}%
where $\Gamma =\{\gamma \in C([0,1],E):\gamma (0)=0,\gamma (1)=e\}$ is the
set of continuous paths joining $0$ and $e$, then there exists a sequence $%
\{u_{n}\}\subset E$ such that
\begin{equation*}
I(u_{n})\rightarrow \alpha \geq \eta \quad \text{and}\quad (1+\Vert
u_{n}\Vert )\Vert I^{\prime }(u_{n})\Vert _{E^{\ast }}\rightarrow 0,\quad
\text{as}\ n\rightarrow \infty .
\end{equation*}
\end{theorem}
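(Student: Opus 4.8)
The plan is to argue by contradiction, building a quantitative deformation whose driving vector field carries the extra weight $1+\Vert u\Vert$; this weight is exactly what turns an ordinary Palais--Smale deformation into one adapted to the Cerami condition. First I would record the two elementary facts that $\alpha\geq\eta$ and that $\alpha$ is finite. Any $\gamma\in\Gamma$ joins $\gamma(0)=0$, where $\Vert 0\Vert=0<\rho$, to $\gamma(1)=e$, where $\Vert e\Vert>\rho$; since $\tau\mapsto\Vert\gamma(\tau)\Vert$ is continuous, the intermediate value theorem gives $\tau_{0}\in(0,1)$ with $\Vert\gamma(\tau_{0})\Vert=\rho$, so $\max_{0\leq\tau\leq1}I(\gamma(\tau))\geq I(\gamma(\tau_{0}))\geq\inf_{\Vert u\Vert=\rho}I(u)\geq\eta$. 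Taking the infimum over $\gamma$ yields $\alpha\geq\eta$, while finiteness follows by testing with the segment $\tau\mapsto\tau e$.

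Next I would negate the conclusion: if no sequence with $I(u_{n})\to\alpha$ and $(1+\Vert u_{n}\Vert)\Vert I'(u_{n})\Vert_{E^{\ast}}\to0$ exists, then there are constants $\bar\varepsilon,\delta>0$, with $\bar\varepsilon$ chosen so small that $\alpha-2\bar\varepsilon>\mu$, such that \[ (1+\Vert u\Vert)\,\Vert I'(u)\Vert_{E^{\ast}}\geq\delta\quad\text{whenever }|I(u)-\alpha|\leq2\bar\varepsilon. \] On the strip $S=\{u:|I(u)-\alpha|\leq2\bar\varepsilon\}$ every point is regular, so $I$ admits a locally Lipschitz pseudo-gradient field $V$ there, satisfying $\langle I'(u),V(u)\rangle\geq\Vert I'(u)\Vert^{2}$ and $\Vert V(u)\Vert\leq2\Vert I'(u)\Vert$. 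Letting $\chi=\psi\circ I$ with $\psi:\mathbb{R}\to[0,1]$ Lipschitz, $\psi\equiv1$ on $[\alpha-\bar\varepsilon,\alpha+\bar\varepsilon]$ and $\psi\equiv0$ outside $[\alpha-2\bar\varepsilon,\alpha+2\bar\varepsilon]$, I would form the field \[ W(u)=-\,\chi(u)\,(1+\Vert u\Vert)\,\frac{V(u)}{\Vert V(u)\Vert}, \] extended by $0$ off $S$, which is locally Lipschitz on all of $E$.

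The decisive step is that $W$ has at most linear growth, $\Vert W(u)\Vert\leq1+\Vert u\Vert$, so a Gr\"onwall estimate shows the flow $\sigma(t,u)$ of $\dot\sigma=W(\sigma)$ exists for all $t\geq0$ without finite-time blow-up; this is precisely where the factor $1+\Vert u\Vert$ earns its keep. Along the flow, $\frac{d}{dt}I(\sigma)=\langle I'(\sigma),W(\sigma)\rangle\leq-\tfrac12\,\chi(\sigma)(1+\Vert\sigma\Vert)\Vert I'(\sigma)\Vert\leq0$ everywhere, and on the inner strip the contradiction hypothesis forces $\frac{d}{dt}I(\sigma)\leq-\delta/2$. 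Since $I$ is non-increasing along $\sigma$, running the flow up to $T=4\bar\varepsilon/\delta$ gives, for every $u$ with $I(u)\leq\alpha+\bar\varepsilon$, the drop $I(\sigma(T,u))\leq\alpha-\bar\varepsilon$, while $\sigma(T,u)=u$ whenever $|I(u)-\alpha|\geq2\bar\varepsilon$.

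Finally I would reach the contradiction. Pick $\gamma\in\Gamma$ with $\max_{\tau}I(\gamma(\tau))\leq\alpha+\bar\varepsilon$. Because $I(0),I(e)\leq\mu<\alpha-2\bar\varepsilon$, both endpoints lie outside $S$ and are fixed by the flow, so $\tilde\gamma:=\sigma(T,\gamma(\cdot))$ again belongs to $\Gamma$; yet $\max_{\tau}I(\tilde\gamma(\tau))\leq\alpha-\bar\varepsilon<\alpha$, contradicting the definition of $\alpha$. The main obstacle is the third step: simultaneously securing global existence of the weighted flow and a uniform energy-decay rate, since it is the controlled growth $\Vert W(u)\Vert\leq1+\Vert u\Vert$ combined with the lower bound on $(1+\Vert u\Vert)\Vert I'(u)\Vert$ that produces the scaled factor $(1+\Vert u_{n}\Vert)$ in the conclusion rather than the bare $\Vert I'(u_{n})\Vert$ of the classical version.
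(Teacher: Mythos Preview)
Your argument is correct and is the standard proof of the Cerami variant of the Mountain Pass Theorem via a weighted pseudo-gradient deformation. However, the paper does not prove this statement at all: Theorem~\ref{t2} is simply quoted from Ekeland's monograph \cite{E2} and used as a black box to produce the Cerami sequence~\eqref{3.5}. So there is no ``paper's own proof'' to compare against; you have supplied a full proof where the authors only give a citation.

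A few minor points worth tightening if you write this out in detail. First, when you assert that $W$ is locally Lipschitz on all of $E$, the delicate spots are the critical points of $I$ and the origin (where $\Vert\cdot\Vert$ fails to be differentiable); your cutoff $\chi=\psi\circ I$ handles the former because any critical point $u_{0}$ has $(1+\Vert u_{0}\Vert)\Vert I'(u_{0})\Vert=0<\delta$, hence lies outside the strip and $W\equiv0$ on a full neighbourhood of $u_{0}$, while Lipschitz (not $C^{1}$) regularity of $\Vert\cdot\Vert$ suffices for Picard--Lindel\"of. Second, the deformation estimate ``$I(\sigma(T,u))\leq\alpha-\bar\varepsilon$ for every $u$ with $I(u)\leq\alpha+\bar\varepsilon$'' requires the usual dichotomy: either the trajectory leaves the inner strip $\{|I-\alpha|\leq\bar\varepsilon\}$ before time $T$ (and then stays below by monotonicity), or it remains there for the full time $T=4\bar\varepsilon/\delta$ and drops by at least $2\bar\varepsilon$. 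You have the right idea but should make this case split explicit. With these clarifications the proof is complete.
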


In what follows, we give the following Lemmas which ensure that the
functional $I_{a,h}$ has the mountain pass geometry.

\begin{lemma}
\label{m1}Let $1<q<2<r,A>0,B>0$, and consider the function
\begin{equation*}
\Psi _{A,B}\left( t\right) :=t^{2}-At^{q}-Bt^{r}\text{ for }t\geq 0.
\end{equation*}%
Then $\max_{t\geq 0}\Psi _{A,B}\left( t\right) >0$ if and only if
\begin{equation*}
A^{r-2}B^{2-q}<d\left( r,q\right) :=\frac{\left( r-2\right) ^{r-2}\left(
2-q\right) ^{2-q}}{\left( r-q\right) ^{r-q}}.
\end{equation*}%
Furthermore, for $t=t_{B}:=\left[ \left( 2-q\right) /B\left( r-q\right) %
\right] ^{1/\left( r-2\right) }$, one has
\begin{equation*}
\Psi _{A,B}\left( t_{B}\right) =t_{B}^{2}\left[ \frac{r-2}{r-q}-AB^{\frac{2-q%
}{r-2}}\left( \frac{r-q}{2-q}\right) ^{\frac{2-q}{r-2}}\right] >0.
\end{equation*}
\end{lemma}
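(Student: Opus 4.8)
The plan is to reduce this two‑parameter statement to an elementary one‑variable maximisation. For $t>0$ write $\Psi_{A,B}(t)=t^{q}\phi(t)$ with $\phi(t):=t^{2-q}-Bt^{r-q}-A$; since $t^{q}>0$ for $t>0$ and $\Psi_{A,B}(0)=0$, we have $\max_{t\ge 0}\Psi_{A,B}(t)>0$ if and only if $\sup_{t>0}\phi(t)>0$, i.e.\ if and only if $A<\sup_{t>0}\psi(t)$, where $\psi(t):=t^{2-q}-Bt^{r-q}=t^{2-q}(1-Bt^{r-2})$. So the whole lemma amounts to computing $\sup_{t>0}\psi$ and rewriting the inequality $A<\sup_{t>0}\psi$ in the stated form.

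Next I would locate $\sup_{t>0}\psi$. Because $2-q>0$ and $r-q>2-q>0$, one has $\psi(t)\to 0^{+}$ as $t\to 0^{+}$, $\psi>0$ on $(0,B^{-1/(r-2)})$, and $\psi(t)\to-\infty$ as $t\to\infty$. Differentiating, $\psi'(t)=t^{1-q}\bigl[(2-q)-B(r-q)t^{r-2}\bigr]$, which has the unique positive zero $t_{B}=\bigl[(2-q)/B(r-q)\bigr]^{1/(r-2)}$ and changes sign there from $+$ to $-$; hence $t_{B}$ is the global maximiser and $\sup_{t>0}\psi(t)=\psi(t_{B})$. From $Bt_{B}^{r-2}=(2-q)/(r-q)$ we get $1-Bt_{B}^{r-2}=(r-2)/(r-q)$, so $\psi(t_{B})=\tfrac{r-2}{r-q}\,t_{B}^{2-q}$.

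It remains to translate $A<\psi(t_{B})$ into the displayed condition. Substituting $t_{B}^{2-q}=\bigl[(2-q)/B(r-q)\bigr]^{(2-q)/(r-2)}$, the inequality $A<\tfrac{r-2}{r-q}\bigl[(2-q)/B(r-q)\bigr]^{(2-q)/(r-2)}$ is, after multiplying through by the positive quantity $\bigl[B(r-q)/(2-q)\bigr]^{(2-q)/(r-2)}$ and then raising both (positive) sides to the power $r-2>0$, equivalent to $A^{r-2}B^{2-q}(r-q)^{2-q}/(2-q)^{2-q}<(r-2)^{r-2}/(r-q)^{r-2}$, which is exactly $A^{r-2}B^{2-q}<d(r,q)$ once the powers of $r-q$ are recombined (note $r-2+2-q=r-q$). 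Finally, for the ``furthermore'' part I would evaluate the factored form at $t_{B}$: $\Psi_{A,B}(t_{B})=t_{B}^{q}\bigl(\psi(t_{B})-A\bigr)=t_{B}^{2}\bigl[\tfrac{r-2}{r-q}-At_{B}^{q-2}\bigr]$, and insert $t_{B}^{q-2}=\bigl[B(r-q)/(2-q)\bigr]^{(2-q)/(r-2)}=B^{(2-q)/(r-2)}\bigl((r-q)/(2-q)\bigr)^{(2-q)/(r-2)}$; positivity of the bracket is precisely the condition just derived. There is no genuine obstacle in this argument — the only points needing care are verifying that the unique critical point $t_{B}$ is the maximum (done via the sign change of $\psi'$, together with the boundary behaviour of $\psi$) and keeping the fractional exponents straight so that raising both sides to a positive power is a legitimate equivalence.
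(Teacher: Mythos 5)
Your argument is correct and complete: the factorization $\Psi_{A,B}(t)=t^{q}\bigl(\psi(t)-A\bigr)$ with $\psi(t)=t^{2-q}-Bt^{r-q}$, the identification of $t_{B}$ as the unique global maximizer of $\psi$ via the sign change of $\psi'$ and the boundary behaviour, and the algebraic translation of $A<\psi(t_{B})$ into $A^{r-2}B^{2-q}<d(r,q)$ (together with the evaluation of $\Psi_{A,B}(t_{B})$ and the observation that positivity of the bracket is exactly that condition) all check out. The paper itself omits the proof, deferring to Lemma 3.2 of \cite{FGU}, and your computation is precisely the standard elementary argument that citation covers, so there is nothing to reconcile beyond noting that you have supplied the details the paper leaves out.
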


\begin{proof}
The proof is essentially the same as that in \cite[Lemma 3.2]{FGU}, and we
omit it here.
\end{proof}

\begin{lemma}
\label{m2}Let $1<q<2<r<k,\overline{A}>0,\overline{B}>0$, and consider the
function
\begin{equation*}
\Phi _{\overline{A},\overline{B}}\left( t\right) :=t^{k}-\overline{A}t^{q}-%
\overline{B}t^{r}\text{ for }t\geq 0.
\end{equation*}%
Then for $t=t_{\overline{B}}:=\left[ \overline{B}\left( r-q\right) /\left(
k-q\right) \right] ^{1/\left( k-r\right) }$, one has
\begin{equation}
\Phi _{\overline{A},\overline{B}}\left( t_{\overline{B}}\right) =-t_{%
\overline{B}}^{q}\left[ \left( \frac{\overline{B}\left( r-q\right) }{k-q}%
\right) ^{\left( k-q\right) /\left( k-r\right) }\left( \frac{k-r}{r-q}%
\right) +\overline{A}\right] <0.  \label{3.1}
\end{equation}%
Furthermore, there exist $t_{0},t_{1}>0$ such that $\min_{t\geq 0}\Phi _{%
\overline{A},\overline{B}}\left( t\right) =\Phi _{\overline{A},\overline{B}%
}\left( t_{0}\right) <0$ and $\Phi _{\overline{A},\overline{B}}\left(
t\right) \geq 0$ for all $t\geq t_{1}.$
\end{lemma}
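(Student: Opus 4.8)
The plan is to handle the two assertions separately. For the explicit identity $(\ref{3.1})$ I would simply substitute. Writing $\tau:=t_{\overline{B}}$, so that $\tau^{k-r}=\overline{B}(r-q)/(k-q)$ by definition, I factor
$\Phi_{\overline{A},\overline{B}}(\tau)=\tau^{q}\bigl(\tau^{r-q}(\tau^{k-r}-\overline{B})-\overline{A}\bigr)$, then replace $\tau^{k-r}-\overline{B}$ by $-\overline{B}(k-r)/(k-q)$ and $\tau^{r-q}$ by $\bigl(\overline{B}(r-q)/(k-q)\bigr)^{(r-q)/(k-r)}$, and collect the powers of $\overline{B}(r-q)/(k-q)$ using $1+(r-q)/(k-r)=(k-q)/(k-r)$. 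This produces exactly the right-hand side of $(\ref{3.1})$, which is manifestly negative since $\tau>0$ and the bracket there is a sum of strictly positive terms.

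For the qualitative part I would set $\Phi_{\overline{A},\overline{B}}(t)=t^{q}\psi(t)$ with $\psi(t):=t^{k-q}-\overline{B}t^{r-q}-\overline{A}$ and analyse $\psi$ on $(0,\infty)$. Its derivative $\psi'(t)=t^{r-q-1}\bigl((k-q)t^{k-r}-\overline{B}(r-q)\bigr)$ vanishes only at $\tau$, so $\psi$ is strictly decreasing on $(0,\tau)$ and strictly increasing on $(\tau,\infty)$; moreover $\psi(0^{+})=-\overline{A}<0$, $\psi(t)\to+\infty$ as $t\to\infty$, and the computation above shows $\tau^{k-q}-\overline{B}\tau^{r-q}=-\overline{B}(k-r)\tau^{r-q}/(k-q)<0$, hence $\psi(\tau)<-\overline{A}<0$. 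Therefore $\psi$ has a unique zero $t_{1}\in(\tau,\infty)$, with $\psi<0$ on $[0,t_{1})$ and $\psi\ge 0$ on $[t_{1},\infty)$; consequently $\Phi_{\overline{A},\overline{B}}(t)=t^{q}\psi(t)\ge 0$ for all $t\ge t_{1}$, while $\Phi_{\overline{A},\overline{B}}\le 0$ on $[0,t_{1}]$ and is strictly negative on $(0,t_{1})$.

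Finally, to obtain the global minimiser I would use that $\Phi_{\overline{A},\overline{B}}$ is continuous on the compact interval $[0,t_{1}]$ and hence attains its minimum there at some $t_{0}$; since $\tau\in(0,t_{1})$ and $\Phi_{\overline{A},\overline{B}}(\tau)<0=\Phi_{\overline{A},\overline{B}}(0)=\Phi_{\overline{A},\overline{B}}(t_{1})$, necessarily $t_{0}\in(0,t_{1})$ with $\Phi_{\overline{A},\overline{B}}(t_{0})\le\Phi_{\overline{A},\overline{B}}(\tau)<0$; and because $\Phi_{\overline{A},\overline{B}}(t)\ge 0>\Phi_{\overline{A},\overline{B}}(t_{0})$ for every $t\ge t_{1}$, this $t_{0}$ is in fact the minimiser over the whole half-line, i.e. $\min_{t\ge 0}\Phi_{\overline{A},\overline{B}}(t)=\Phi_{\overline{A},\overline{B}}(t_{0})<0$. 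I do not anticipate a genuine obstacle; the one spot requiring care is the bookkeeping of the fractional powers of $\overline{B}(r-q)/(k-q)$ in $(\ref{3.1})$, and the only structural point is the reduction from the non-compact half-line to the compact interval $[0,t_{1}]$, which is precisely what the growth estimate $\Phi_{\overline{A},\overline{B}}\ge 0$ on $[t_{1},\infty)$ provides.
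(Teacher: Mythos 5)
Your proof is correct, and it follows essentially the same route as the paper: factor $\Phi_{\overline{A},\overline{B}}(t)=t^{q}\left(t^{k-q}-\overline{A}-\overline{B}t^{r-q}\right)$, observe that the derivative of the bracketed function vanishes only at $t_{\overline{B}}$, evaluate there to get $(\ref{3.1})$, and deduce the sign/minimum statements from the resulting monotonicity. The paper leaves these last steps as ``follows easily,'' while you have simply written them out in full; no discrepancy or gap.
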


\begin{proof}
Since $\Phi _{\overline{A},\overline{B}}\left( t\right) =t^{q}\left( t^{k-q}-%
\overline{A}-\overline{B}t^{r-q}\right) $, it follows that $\Phi _{\overline{%
A},\overline{B}}\left( t\right) <0$ if and only if $t^{k-q}-\overline{A}-%
\overline{B}t^{r-q}<0$. The derivative of $t^{k-q}-\overline{A}-\overline{B}%
t^{r-q}$ vanishes exactly for $t=t_{\overline{B}}$ and one readily computes $%
\Phi _{\overline{A},\overline{B}}\left( t_{\overline{B}}\right) $, as
indicated in $\left( \ref{3.1}\right) $. The conclusion of the lemma then
follows easily.
\end{proof}

\begin{lemma}
\label{m3}Let $2<r<k,A_{0}>0,B_{0}>0$, and consider the function
\begin{equation*}
\Theta _{A_{0},B_{0}}\left( t\right) :=t^{k}+A_{0}t^{2}-B_{0}t^{r}\text{ for
}t\geq 0.
\end{equation*}%
Then $\min_{t\geq 0}\Theta _{A_{0},B_{0}}\left( t\right) <0$ if and only if
\begin{equation*}
A_{0}^{k-r}B_{0}^{2-k}<d_{0}\left( r\right) :=\frac{\left( k-r\right)
^{k-r}\left( r-2\right) ^{r-2}}{\left( k-2\right) ^{k-2}}.
\end{equation*}%
For $t=t_{B_{0}}:=\left[ B_{0}\left( r-2\right) /\left( k-2\right) \right]
^{1/\left( k-r\right) }$, one has%
\begin{equation}
\Theta _{A_{0},B_{0}}\left( t_{B_{0}}\right) =t_{B_{0}}^{2}\left[
A_{0}-B_{0}\left( \frac{B_{0}\left( r-2\right) }{k-2}\right) ^{\frac{r-2}{k-r%
}}\left( \frac{k-r}{k-2}\right) \right] <0.  \label{3.3}
\end{equation}%
Furthermore, there exist $t_{0}<t_{B_{0}}<t_{1}$ such that $\Theta
_{A_{0},B_{0}}\left( t_{0}\right) =\Theta _{A_{0},B_{0}}\left( t_{1}\right)
=0$ and $\Theta _{A_{0},B_{0}}\left( t\right) >0$ for all $t\in \left(
0,t_{0}\right) \cup \left( t_{1},\infty \right) .$
\end{lemma}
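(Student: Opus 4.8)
The plan is to reduce everything to a one–variable calculus analysis of the function obtained after dividing out the factor $t^{2}$. For $t>0$ one writes $\Theta_{A_{0},B_{0}}(t)=t^{2}\,\varphi(t)$ with $\varphi(t):=t^{k-2}-B_{0}t^{r-2}+A_{0}$, so that $\min_{t\geq 0}\Theta_{A_{0},B_{0}}(t)<0$ if and only if $\varphi$ attains a negative value somewhere on $(0,\infty)$. Since $\varphi(0)=A_{0}>0$ and $\varphi(t)\to\infty$ as $t\to\infty$ (because $k-2>r-2>0$), it suffices to locate the global minimum of $\varphi$ on $(0,\infty)$. This is the same strategy that underlies Lemma \ref{m1}.

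First I would differentiate: $\varphi'(t)=(k-2)t^{k-3}-B_{0}(r-2)t^{r-3}=t^{r-3}\bigl[(k-2)t^{k-r}-B_{0}(r-2)\bigr]$, which for $t>0$ vanishes precisely at $t=t_{B_{0}}=\bigl[B_{0}(r-2)/(k-2)\bigr]^{1/(k-r)}$, with $\varphi'<0$ on $(0,t_{B_{0}})$ and $\varphi'>0$ on $(t_{B_{0}},\infty)$; hence $t_{B_{0}}$ is the unique global minimizer of $\varphi$, and therefore of $\Theta_{A_{0},B_{0}}$. Next I would evaluate $\varphi(t_{B_{0}})=t_{B_{0}}^{r-2}\bigl(t_{B_{0}}^{k-r}-B_{0}\bigr)+A_{0}$ and use $t_{B_{0}}^{k-r}=B_{0}(r-2)/(k-2)$, which gives $t_{B_{0}}^{k-r}-B_{0}=-B_{0}(k-r)/(k-2)$ and hence the displayed identity $\Theta_{A_{0},B_{0}}(t_{B_{0}})=t_{B_{0}}^{2}\varphi(t_{B_{0}})=t_{B_{0}}^{2}\bigl[A_{0}-B_{0}\bigl(B_{0}(r-2)/(k-2)\bigr)^{(r-2)/(k-r)}(k-r)/(k-2)\bigr]$, i.e. equation \eqref{3.3}.

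It then remains to recast the sign condition $\varphi(t_{B_{0}})<0$ as the stated inequality. Since $1+\frac{r-2}{k-r}=\frac{k-2}{k-r}$, the inequality $A_{0}<B_{0}\bigl(B_{0}(r-2)/(k-2)\bigr)^{(r-2)/(k-r)}(k-r)/(k-2)$ is the same as $A_{0}<B_{0}^{(k-2)/(k-r)}\bigl((r-2)/(k-2)\bigr)^{(r-2)/(k-r)}(k-r)/(k-2)$; raising both sides to the positive power $k-r$ and dividing by $B_{0}^{k-2}$ yields exactly $A_{0}^{k-r}B_{0}^{2-k}<d_{0}(r)$, after collecting the $(k-2)$ in the denominator from the exponents $(r-2)+(k-r)=k-2$. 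Finally, for the last assertion, under this condition $\varphi$ decreases strictly from $A_{0}>0$ to $\varphi(t_{B_{0}})<0$ on $(0,t_{B_{0}})$ and increases strictly from $\varphi(t_{B_{0}})<0$ to $+\infty$ on $(t_{B_{0}},\infty)$, so the intermediate value theorem provides unique zeros $t_{0}\in(0,t_{B_{0}})$ and $t_{1}\in(t_{B_{0}},\infty)$ of $\varphi$ — equivalently of $\Theta_{A_{0},B_{0}}$, since $t_{0},t_{1}>0$ — with $\varphi>0$, hence $\Theta_{A_{0},B_{0}}>0$, on $(0,t_{0})\cup(t_{1},\infty)$. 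The only place demanding care is the exponent bookkeeping in this translation step; the rest is elementary, and the whole argument runs parallel to that of Lemma \ref{m1}.
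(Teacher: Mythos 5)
Your argument is correct and is essentially the paper's own proof spelled out in detail: factor $\Theta_{A_{0},B_{0}}(t)=t^{2}\bigl(t^{k-2}+A_{0}-B_{0}t^{r-2}\bigr)$, locate the unique critical point $t_{B_{0}}$ of the reduced function, evaluate it to get \eqref{3.3}, and translate the sign condition into $A_{0}^{k-r}B_{0}^{2-k}<d_{0}(r)$, with the zero/sign structure following from strict monotonicity on either side of $t_{B_{0}}$. One incidental slip — $t_{B_{0}}$ minimizes $\varphi$, not in general $\Theta_{A_{0},B_{0}}$ itself — is harmless, since your equivalence only uses that $\Theta_{A_{0},B_{0}}$ takes a negative value iff $\varphi(t_{B_{0}})<0$.
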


\begin{proof}
Since $\Theta _{A_{0},B_{0}}\left( t\right) =t^{2}\left(
t^{k-2}+A_{0}-B_{0}t^{r-2}\right) $, it follows that $\Theta
_{A_{0},B_{0}}\left( t\right) <0$ if and only if $%
t^{k-2}+A_{0}-B_{0}t^{r-2}<0.$ The derivative of $t^{k-2}+A_{0}-B_{0}t^{r-2}$
vanishes exactly for $t=t_{B_{0}}$ and one readily computes $\Theta
_{A_{0},B_{0}}\left( t_{B_{0}}\right) $, as indicated in $\left( \ref{3.3}%
\right) $. The conclusion of the lemma then follows easily.
\end{proof}

\begin{lemma}
\label{lem1}Suppose that conditions ${(D_{1})}-\left( D_{3}\right) $ hold.
Then there exist $\Lambda _{0},\rho >0$ such that for every $h\in
L^{2/\left( 2-q\right) }\left( \mathbb{R}^{N}\right) $ with $\left\vert
h^{+}\right\vert _{L^{2/\left( 2-q\right) }}<\Lambda _{0},$
\begin{equation*}
\inf \{I_{a,h}(u):u\in H^{1}(\mathbb{R}^{N})\ \text{with}\ \Vert u\Vert
=\rho \}>\eta
\end{equation*}%
for some $\eta >0.$ Furthermore, if $h^{+}\equiv 0,$ then there exists $\rho
_{0}>\rho $ such that $I_{a,h}\left( u\right) >0$ for all $u\in B_{\rho
_{0}}\backslash \left\{ 0\right\} $ and $\inf_{u\in B_{\rho
_{0}}}I_{a,h}\left( u\right) =0.$
\end{lemma}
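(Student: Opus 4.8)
The plan is to bound $I_{a,h}$ from below on all of $H^{1}(\mathbb{R}^{N})$ by an elementary one--variable function of $t=\|u\|$ of exactly the form $\tfrac{\lambda}{2}\Psi_{A,B}(t)$ treated in Lemma~\ref{m1}, and then to read off $\rho$, $\eta$ and (when $h^{+}\equiv 0$) $\rho_{0}$ directly from that lemma. First I would fix $\varepsilon>0$ once and for all so small that $\lambda:=b-|p_{1}^{+}|_{\infty}-\varepsilon>0$; this is possible by $(D_{3})$. From the hypotheses $(D_{1})$ and $(D_{2})$ one has the standard growth estimate: for some $r\in(2,2^{\ast})$ and some constant $C_{\varepsilon}>0$ (uniform in $x$),
\[
G(x,s)\ \le\ \frac{|p_{1}^{+}|_{\infty}+\varepsilon}{2}\,s^{2}+C_{\varepsilon}|s|^{r}\qquad(s\ge 0,\ x\in\mathbb{R}^{N}),
\]
while $G(x,s)=0$ for $s\le 0$ by $(D_{1})$. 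Using $\widehat{m}\ge 0$, the inequality $\int_{\mathbb{R}^{N}}u^{2}\,dx\le\|u\|^{2}$, the embedding $|u|_{r}\le S_{r}^{-1}\|u\|$, and --- via H\"older with exponents $\tfrac{2}{2-q},\tfrac{2}{q}$ followed by $|u|_{2}\le S_{2}^{-1}\|u\|$ --- the bound $\int_{\mathbb{R}^{N}}h|u^{+}|^{q}\,dx\le|h^{+}|_{L^{2/(2-q)}}S_{2}^{-q}\|u\|^{q}$, this yields
\[
I_{a,h}(u)\ \ge\ \frac{\lambda}{2}\Big(\|u\|^{2}-A\|u\|^{q}-B\|u\|^{r}\Big),
\]
where $B:=2C_{\varepsilon}S_{r}^{-r}/\lambda$ is a \emph{fixed} constant (independent of $a$ and $h$, since the term $\tfrac{a}{2}\widehat{m}(\|u\|^{2})\ge 0$ was simply discarded) and $A:=2S_{2}^{-q}|h^{+}|_{L^{2/(2-q)}}/\lambda$ is proportional to $|h^{+}|_{L^{2/(2-q)}}$.

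Next I would apply Lemma~\ref{m1} with this $A$ and $B$. Put $\rho:=t_{B}=\big((2-q)/(B(r-q))\big)^{1/(r-2)}$, which depends only on $B,r,q$; then for $\|u\|=\rho$ Lemma~\ref{m1} gives
\[
I_{a,h}(u)\ \ge\ \frac{\lambda}{2}\,\Psi_{A,B}(t_{B})\ =\ \frac{\lambda}{2}\,t_{B}^{2}\Big[\frac{r-2}{r-q}-A\,B^{\frac{2-q}{r-2}}\Big(\frac{r-q}{2-q}\Big)^{\frac{2-q}{r-2}}\Big],
\]
and the bracket is positive precisely when $A^{r-2}B^{2-q}<d(r,q)$, i.e. when $|h^{+}|_{L^{2/(2-q)}}<\Lambda_{0}$ for a suitable $\Lambda_{0}>0$ depending only on $B,r,q$. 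Moreover the bracket is strictly decreasing in $A$, hence in $|h^{+}|_{L^{2/(2-q)}}$, so for every admissible $h$ it is bounded below by its positive value $c_{0}$ at $|h^{+}|_{L^{2/(2-q)}}=\Lambda_{0}$; hence $\inf_{\|u\|=\rho}I_{a,h}(u)\ge\tfrac{\lambda}{2}t_{B}^{2}c_{0}>0$, which gives the stated inequality for an appropriate $\eta>0$, uniformly in $h$ (and in $a>0$).

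For the last assertion, suppose $h^{+}\equiv 0$. Then $h\le 0$, so $-\int_{\mathbb{R}^{N}}h|u^{+}|^{q}\,dx\ge 0$ and $A=0$, and the lower bound above sharpens to $I_{a,h}(u)\ge\tfrac{\lambda}{2}\|u\|^{2}\big(1-B\|u\|^{r-2}\big)$. Thus $I_{a,h}(u)>0$ for every $u$ with $0<\|u\|<\rho_{0}:=B^{-1/(r-2)}$, whereas $I_{a,h}(0)=0$ (every term vanishes at $u=0$, using $\widehat{m}(0)=0$); therefore $I_{a,h}>0$ on $B_{\rho_{0}}\setminus\{0\}$ and $\inf_{B_{\rho_{0}}}I_{a,h}=0$. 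Finally $\rho_{0}>\rho$ because $B^{-1/(r-2)}>\big((2-q)/(B(r-q))\big)^{1/(r-2)}$ is equivalent to $r-q>2-q$, i.e. to $r>2$.

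I expect the only slightly delicate point to be making the growth estimate for $G$ fully rigorous with a constant $C_{\varepsilon}$ \emph{uniform in} $x$: one splits $s\in(0,\infty)$ into a neighbourhood of $0$ (handled by the uniform limit in $(D_{1})$), a neighbourhood of $\infty$ (handled by the uniform limit in $(D_{2})$, where $g$ is at most linearly growing), and a bounded middle range (where $g$ is controlled uniformly in $x$ and $s^{r-1}$ stays bounded away from $0$). Everything else is routine: the key structural point is that $\widehat{m}\ge 0$ lets us discard the $a$--term, so that $B$, $\rho$, $\rho_{0}$, $\Lambda_{0}$ and $\eta$ can be chosen free of $a$ and $h$, after which the conclusion is a direct invocation of Lemma~\ref{m1}.
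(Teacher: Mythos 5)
Your proposal is correct and follows essentially the same route as the paper: the same growth estimate $G(x,s)\le\frac{|p_1^+|_\infty+\epsilon}{2}s^2+C_\epsilon|s|^r$ from $(D_1)$--$(D_3)$, discarding the nonnegative $\frac{a}{2}\widehat m(\|u\|^2)$ term, H\"older/Sobolev for the $h$-term, and then Lemma \ref{m1} with $A\propto|h^+|_{L^{2/(2-q)}}$ and a fixed $B$ to produce $\rho=t_B$, $\Lambda_0$, $\eta$, and (for $h^+\equiv0$) $\rho_0=B^{-1/(r-2)}>\rho$. Your explicit monotonicity-in-$A$ remark (making $\eta$ uniform in $h$) and the corrected form of $\rho_0$ are harmless refinements of what the paper does.
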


\begin{proof}
By conditions ${(D_{1})}-\left( D_{3}\right) $ and $\left( D_{5}\right) ,$
and noticing that $\lim_{s\rightarrow +\infty }\frac{g\left( x,s\right) }{%
s^{r-1}}=0$ uniformly in $x\in \mathbb{R}^{N}$ for any fixed $2<r<2^{\ast }$
($2^{\ast }=\infty $ if $N=1,2$ and $2^{\ast }=\frac{2N}{N-2}$ if $N\geq 3$%
), it is easy to see that for every $\epsilon >0$, there exists $C_{\epsilon
}=C\left( \epsilon ,r,g\right) >0$ such that
\begin{equation}
g(x,s)\leq \frac{\left\vert p_{1}^{+}\right\vert _{\infty }+\epsilon }{2}s+%
\frac{C_{\epsilon }}{r}|s|^{r-1},\quad \text{for all}\ s\geq 0  \label{6}
\end{equation}%
and
\begin{equation*}
G(x,s)\leq \frac{\left\vert p_{1}^{+}\right\vert _{\infty }+\epsilon }{2}%
s^{2}+\frac{C_{\epsilon }}{r}|s|^{r},\quad \text{for all}\ s\geq 0.
\end{equation*}%
Since $\left\vert p_{1}^{+}\right\vert _{\infty }<b,$ we can find $\epsilon
_{0}>0$ with $\left\vert p_{1}^{+}\right\vert _{\infty }+\epsilon _{0}<b$
and there is a $C_{0}=C\left( \epsilon _{0},r,g\right) >0$ such that
\begin{equation}
G(x,s)\leq \frac{\left\vert p_{1}^{+}\right\vert _{\infty }+\epsilon _{0}}{2}%
s^{2}+\frac{C_{0}}{r}|s|^{r},\quad \text{for all}\ s\geq 0.  \label{3.2}
\end{equation}%
Thus, from $(\ref{3.2})$ and the Sobolev inequality, we have for all $u\in
H^{1}(\mathbb{R}^{N})$,
\begin{eqnarray}
\int_{\mathbb{R}^{N}}G(x,u)dx &\leq &\frac{\left\vert p_{1}^{+}\right\vert
_{\infty }+\epsilon _{0}}{2}\int_{\mathbb{R}^{N}}u^{2}dx+\frac{C_{0}}{r}%
\int_{\mathbb{R}^{N}}|u|^{r}dx  \notag \\
&\leq &\frac{\left\vert p_{1}^{+}\right\vert _{\infty }+\epsilon _{0}}{2}%
\Vert u\Vert ^{2}+\frac{C_{0}S_{r}^{-r}}{r}\Vert u\Vert ^{r},  \label{3.4}
\end{eqnarray}%
which implies that
\begin{eqnarray}
I_{a,h}(u) &=&\frac{a}{2}\widehat{m}\left( \Vert u\Vert ^{2}\right) +\frac{b%
}{2}\Vert u\Vert ^{2}-\int_{\mathbb{R}^{N}}G(x,u)dx-\frac{1}{q}\int_{\mathbb{%
R}^{N}}h\left\vert u^{+}\right\vert ^{q}dx  \notag \\
&\geq &\frac{b}{2}\Vert u\Vert ^{2}-\frac{\left\vert p_{1}^{+}\right\vert
_{\infty }+\epsilon _{0}}{2}\Vert u\Vert ^{2}-\frac{C_{0}S_{r}^{-r}}{r}\Vert
u\Vert ^{r}-\frac{\left\vert h^{+}\right\vert _{L^{2/\left( 2-q\right) }}}{%
qS_{2}^{q}}\left\Vert u\right\Vert ^{q}  \notag \\
&\geq &\frac{b-\left\vert p_{1}^{+}\right\vert _{\infty }-\epsilon _{0}}{2}%
\Vert u\Vert ^{2}-\frac{C_{0}}{rS_{r}^{r}}\Vert u\Vert ^{r}-\frac{\left\vert
h^{+}\right\vert _{L^{2/\left( 2-q\right) }}}{qS_{2}^{q}}\left\Vert
u\right\Vert ^{q}  \label{3.7}
\end{eqnarray}%
for all $u\in H^{1}(\mathbb{R}^{N}).$ For $h\in L^{2/\left( 2-q\right)
}\left( \mathbb{R}^{N}\right) $ with $\left\vert h^{+}\right\vert
_{L^{2/\left( 2-q\right) }}>0.$ We now apply to Lemma \ref{m1} above with%
\begin{equation*}
A=\frac{2\left\vert h^{+}\right\vert _{L^{2/\left( 2-q\right) }}}{%
qS_{2}^{q}\left( b-\left\vert p_{1}^{+}\right\vert _{\infty }-\epsilon
_{0}\right) }>0\text{ and }B=\frac{2C_{0}}{rS_{r}^{r}\left( b-\left\vert
p_{1}^{+}\right\vert _{\infty }-\epsilon _{0}\right) }>0.
\end{equation*}%
This shows that for all $u\in H^{1}(\mathbb{R}^{N})$ with $\left\Vert
u\right\Vert =t_{B}=\left[ \left( 2-q\right) /B\left( r-q\right) \right]
^{1/\left( r-2\right) }$,
\begin{equation*}
I_{a,h}\left( u\right) \geq \frac{b-\left\vert p_{1}^{+}\right\vert _{\infty
}-\epsilon _{0}}{2}\Psi _{A,B}\left( t_{B}\right) >0
\end{equation*}%
provided that $A^{r-2}B^{2-q}<d\left( r,s\right) $, i.e., provided that%
\begin{equation*}
\left\vert h^{+}\right\vert _{L^{2/\left( 2-q\right) }}<\Lambda _{0}:=\frac{%
\left( r-2\right) S_{2}^{q}}{2}\left( \frac{b-\left\vert
p_{1}^{+}\right\vert _{\infty }-\epsilon _{0}}{r-s}\right) ^{\left(
r-q\right) /\left( r-2\right) }\left( \frac{rS_{r}^{r}\left( 2-s\right) }{%
2C_{0}}\right) ^{\left( 2-q\right) /\left( r-2\right) }.
\end{equation*}%
Letting
\begin{equation*}
\rho =t_{B}=\left[ \left( 2-q\right) /B\left( r-q\right) \right] ^{1/\left(
r-2\right) }>0
\end{equation*}%
and
\begin{equation*}
\eta =\frac{b-\left\vert p_{1}^{+}\right\vert _{\infty }-\epsilon _{0}}{2}%
\Psi _{A,B}\left( t_{B}\right) >0,
\end{equation*}%
it is easy to see that the result holds. Moreover, if $h^{+}\equiv 0,$ then
by $\left( \ref{3.7}\right) ,$
\begin{equation*}
I_{a,h}\left( u\right) >0\text{ for all }u\in B_{\rho _{0}}\backslash
\left\{ 0\right\}
\end{equation*}%
and
\begin{equation*}
\inf_{u\in B_{\rho _{0}}}I_{a,h}\left( u\right) =0,
\end{equation*}%
where $\rho _{0}=\left( \frac{r-q}{2-r}\right) ^{1/\left( r-2\right) }\rho
>\rho .$ This completes the proof.
\end{proof}

\begin{lemma}
\label{lem2}Suppose that conditions ${(D_{1})}-\left( D_{3}\right) $ hold.
Then for each $h\in L^{2/\left( 2-q\right) }\left( \mathbb{R}^{N}\right) $
there exist $a^{\ast }>0$ and $e\in H^{1}(\mathbb{R}^{N})$ with $\Vert
e\Vert >\rho $ such that $I_{a,h}(e)<0$ for all $a\in \left( 0,a^{\ast
}\right) $, where $\rho $ is given by Lemma \ref{lem1}.
\end{lemma}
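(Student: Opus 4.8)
The plan is to take the mountain-pass endpoint $e$ of Theorem~\ref{t2} in the form $e=T\phi$, where $\phi\ge 0$ is a fixed, compactly supported function for which $\tfrac b2 t^{2}\|\phi\|^{2}-\int_{\mathbb{R}^{N}}G(x,t\phi)\,dx\to-\infty$ as $t\to\infty$ (made possible by conditions $(D_{2})$ and $(D_{3})$), and $T>0$ is large. For $\phi$ and $T$ fixed the Kirchhoff term contributes only the constant $\tfrac a2\widehat{m}(T^{2}\|\phi\|^{2})$, which is absorbed by taking $a$ small; this is the source of the threshold $a^{\ast}$. Observe that $e$ will not depend on $a$, as the statement requires.

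\emph{Choice of $\phi$.} Since $\int_{\mathbb{R}^{N}}p_{2}u^{2}\le|p_{2}^{+}|_{\infty}\|u\|^{2}$, the number $\mu^{\ast}$ in (\ref{2}) is positive and finite, and $(D_{3})$ gives $b\mu^{\ast}<1$. Fix $\epsilon'>0$ with $b(\mu^{\ast}+\epsilon')<1$ and pick $u\in H^{1}(\mathbb{R}^{N})$ with $\int p_{2}u^{2}=1$ and $\|u\|^{2}<\mu^{\ast}+\epsilon'$. Replacing $u$ by $|u|$ (which changes neither $\|u\|$ nor $\int p_{2}u^{2}$), then multiplying by a cut-off $\xi(\cdot/n)$ and renormalising so that $\int p_{2}\phi^{2}=1$, one obtains for $n$ large a function $\phi\ge 0$ with compact support satisfying $\int_{\mathbb{R}^{N}}p_{2}\phi^{2}=1$ and $b\|\phi\|^{2}<1$; here one uses $|u|\xi(\cdot/n)\to|u|$ in $H^{1}(\mathbb{R}^{N})$, hence in $L^{2}$, together with $p_{2}\in L^{\infty}$.

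\emph{Estimate along $t\phi$.} From $(D_{2})$, given $\epsilon>0$ there is $S_{\epsilon}>0$ with $g(x,s)\ge(p_{2}(x)-\epsilon)s$ for all $s\ge S_{\epsilon}$, uniformly in $x$; combined with $g(x,s)\ge p_{1}(x)s\ge-|p_{1}|_{\infty}s$ (from $(D_{1})$), integration in $s$ yields a constant $C_{\epsilon}>0$ such that $G(x,s)\ge\tfrac12(p_{2}(x)-\epsilon)s^{2}-C_{\epsilon}$ for all $s\ge 0$, $x\in\mathbb{R}^{N}$. Evaluating $I_{a,h}$ at $t\phi$ ($t\ge 0$) and using $\phi\ge 0$, the compactness of $\mathrm{supp}\,\phi$, the normalisation $\int p_{2}\phi^{2}=1$, and Hölder's inequality (which gives $h|\phi|^{q}\in L^{1}$ since $h\in L^{2/(2-q)}$ and $\phi\in L^{2}$), I obtain
\begin{equation*}
I_{a,h}(t\phi)\le\frac a2\,\widehat{m}\!\left(t^{2}\|\phi\|^{2}\right)-\frac{\delta}{2}t^{2}+C_{1}t^{q}+C_{2},
\end{equation*}
where, after fixing $\epsilon$ small, $\delta:=1-b\|\phi\|^{2}-\epsilon|\phi|_{2}^{2}>0$, and $C_{1},C_{2}\ge 0$ depend on $\phi,h,\epsilon$ but not on $a$.

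\emph{Conclusion.} Since $q<2$, fix $T>\rho/\|\phi\|$ so large that $-\tfrac{\delta}{2}T^{2}+C_{1}T^{q}+C_{2}<-1$, and put $e:=T\phi$ (so $\|e\|>\rho$) and $a^{\ast}:=\bigl(\widehat{m}(T^{2}\|\phi\|^{2})+1\bigr)^{-1}>0$. Then for every $a\in(0,a^{\ast})$, $I_{a,h}(e)\le\tfrac a2\widehat{m}(T^{2}\|\phi\|^{2})-1<-\tfrac12<0$, as required (note also $I_{a,h}(0)=0$). The step I expect to be the real work is the construction of $\phi$: turning a near-minimiser of $\mu^{\ast}$ into a compactly supported competitor without losing $b\|\phi\|^{2}<\int p_{2}\phi^{2}$; beyond that, the only new feature relative to the classical case is the need to take $a^{\ast}$ small, which causes no trouble because $\phi$ and $T$, hence $\widehat{m}(T^{2}\|\phi\|^{2})$, are chosen before $a$.
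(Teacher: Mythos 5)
Your proposal is correct and follows essentially the same route as the paper: both proofs test $I_{a,h}$ along the ray $t\phi$ through a (near-)minimizer of $\mu^{\ast}$ with $\int_{\mathbb{R}^{N}}p_{2}\phi^{2}=1$ and $b\Vert \phi\Vert^{2}<1$ furnished by $(D_{3})$, use $(D_{2})$ to make the non-Kirchhoff part tend to $-\infty$, and only then shrink $a$ so that the term $\frac{a}{2}\widehat{m}$ cannot spoil the negativity. The only differences are technical: the paper argues qualitatively via Fatou's lemma and the continuity $I_{a,h}(e)\rightarrow I_{0,h}(e)$ as $a\rightarrow 0^{+}$, while you replace Fatou by a uniform quantitative lower bound on $G$ (hence the compactly supported truncation of $\phi$) and exhibit $a^{\ast}$ explicitly; both yield the same conclusion with $e$ independent of $a$.
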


\begin{proof}
By the condition $\left( D_{3}\right) $, in view of the definition of $\mu
^{\ast }$ and $b<1/\mu ^{\ast }$, there is $\phi \in H^{1}(\mathbb{R}%
^{N})\backslash \left\{ 0\right\} $ with $\phi \geq 0$ such that $\int_{%
\mathbb{R}^{N}}p_{2}(x)\phi ^{2}dx=1$ and $b\mu ^{\ast }\leq b\Vert \phi
\Vert ^{2}<1$. According to the condition $\left( D_{2}\right) $ and Fatou's
lemma, we have%
\begin{eqnarray*}
\lim_{t\rightarrow +\infty }\frac{I_{0,h}(t\phi )}{t^{2}} &=&\frac{b}{2}%
\Vert \phi \Vert ^{2}-\lim_{t\rightarrow +\infty }\int_{\mathbb{R}^{N}}\frac{%
G(x,t\phi )}{t^{2}\phi ^{2}}\phi ^{2}dx-\lim_{t\rightarrow \infty }\frac{1}{%
t^{2-q}}\int_{\mathbb{R}^{N}}h\left( x\right) \left\vert \phi \right\vert
^{q}dx \\
&\leq &\frac{b}{2}\Vert \phi \Vert ^{2}-\int_{\mathbb{R}^{N}}\lim_{t%
\rightarrow +\infty }\frac{G(x,t\phi )}{t^{2}\phi ^{2}}\phi ^{2}dx \\
&=&\frac{b}{2}\Vert \phi \Vert ^{2}-\frac{1}{2}\int_{\mathbb{R}%
^{N}}p_{2}\left( x\right) \phi ^{2}dx \\
&=&\frac{1}{2}\left( b\Vert \phi \Vert ^{2}-1\right) <0,
\end{eqnarray*}%
where $I_{0,h}=I_{a,h}$ for $a=0.$ So, if $I_{0,h}(t\phi )\rightarrow
-\infty $ as $t\rightarrow +\infty $, then there exists $e\in H^{1}(\mathbb{R%
}^{N})$ with $\Vert e\Vert >\rho $ such that $I_{0,h}(e)<0$. Since $%
I_{a,h}(e)\rightarrow I_{0,h}(e)$ as $a\rightarrow 0^{+}$, there exists $%
a^{\ast }>0$ such that $I_{a,h}(e)<0$ for all $a\in \left( 0,a^{\ast
}\right) $ and the lemma is proved.
\end{proof}

\begin{lemma}
\label{lem5}Suppose that conditions ${(D_{1})}-\left( D_{3}\right) $ and $%
\left( D_{6}\right) $ hold. Let $h\in L^{2/\left( 2-q\right) }\left( \mathbb{%
R}^{N}\right) $ and $a^{\ast }>0$ be as in Lemma \ref{lem2}. Then for every $%
a\in \left( 0,a^{\ast }\right) $ there exists $D_{a}<0$ such that%
\begin{equation*}
D_{a}\leq \widetilde{\theta }_{h}:=\inf \left\{ I_{a,h}(u):u\in H^{1}(%
\mathbb{R}^{N})\right\} <0.
\end{equation*}%
Furthermore, there exists $R_{h}>0$ such that $I_{a,h}(u)>0$ for all $u\in
H^{1}(\mathbb{R}^{N})$ with $\left\Vert u\right\Vert \geq R_{h}$, and
\begin{equation*}
\inf \left\{ I_{a,h}(u):u\in H^{1}(\mathbb{R}^{N})\right\} =\inf \left\{
I_{a,h}(u):u\in B_{R_{h}}\right\} <0.
\end{equation*}%
In particular,
\begin{equation*}
\inf \left\{ I_{a,0}(u):u\in H^{1}(\mathbb{R}^{N})\right\} =\inf \left\{
I_{a,0}(u):u\in B_{R_{h}}\right\} <0,
\end{equation*}%
where $I_{a,0}(u)=I_{a,h}(u)$ for $h\equiv 0.$
\end{lemma}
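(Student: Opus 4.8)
The plan is to use condition $(D_6)$ to bound $I_{a,h}$ from below by a coercive function of the single real variable $t=\Vert u\Vert$, and then to read off all the assertions from the elementary shape of that function together with the negativity already supplied by Lemma~\ref{lem2}. First, from $(D_6)$ one has $\widehat{m}(t)=\int_0^t m(s)\,ds\ge \frac{d_0}{1+\delta_0}t^{1+\delta_0}$, hence $\widehat{m}(\Vert u\Vert^2)\ge \frac{d_0}{1+\delta_0}\Vert u\Vert^{2(1+\delta_0)}$; note also that $(D_6)$ implies $(D_5)$, so the estimate $\int_{\mathbb{R}^N}G(x,u)\,dx\le\frac{|p_1^+|_\infty+\epsilon_0}{2}\Vert u\Vert^2+\frac{C_0}{rS_r^r}\Vert u\Vert^r$ from the proof of Lemma~\ref{lem1} is available for any fixed $2<r<2^\ast$ (with $|p_1^+|_\infty+\epsilon_0<b$). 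Together with $\int_{\mathbb{R}^N}h|u^+|^q\,dx\le|h^+|_{L^{2/(2-q)}}S_2^{-q}\Vert u\Vert^q$, obtained from Hölder's inequality and the Sobolev embedding, this gives $I_{a,h}(u)\ge\phi(\Vert u\Vert)$ for every $u$, where
\[
\phi(t):=\frac{ad_0}{2(1+\delta_0)}t^{2(1+\delta_0)}+\frac{b-|p_1^+|_\infty-\epsilon_0}{2}t^2-\frac{C_0}{rS_r^r}t^r-\frac{|h^+|_{L^{2/(2-q)}}}{qS_2^q}t^q .
\]
The crucial point is to use the freedom in the choice of $r$ in Lemma~\ref{lem1} and fix $r$ with $2<r<\min\{2^\ast,2+2\delta_0\}$, which is possible because $\delta_0>0$; then $2(1+\delta_0)>r>2>q$, so $\phi$ is continuous on $[0,\infty)$, $\phi(0)=0$, and $\phi(t)\to+\infty$ as $t\to\infty$.

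Next I would extract the three conclusions. Being continuous and coercive, $\phi$ attains its minimum, so $D_a:=\min_{t\ge0}\phi(t)$ is finite; since $I_{a,h}(u)\ge\phi(\Vert u\Vert)\ge D_a$ for all $u$, we get $\widetilde{\theta}_h\ge D_a$. Because $\phi(t)\to+\infty$, there is $R_h>0$ with $\phi(t)>0$ for $t\ge R_h$, whence $I_{a,h}(u)\ge\phi(\Vert u\Vert)>0$ for all $u$ with $\Vert u\Vert\ge R_h$. By Lemma~\ref{lem2}, for each $a\in(0,a^\ast)$ there is $e\in H^1(\mathbb{R}^N)$, $\Vert e\Vert>\rho$, with $I_{a,h}(e)<0$, so $\widetilde{\theta}_h\le I_{a,h}(e)<0$ and therefore $D_a\le\widetilde{\theta}_h<0$. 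Finally, writing $\widetilde{\theta}_h=\min\{\inf_{B_{R_h}}I_{a,h},\ \inf_{\Vert u\Vert\ge R_h}I_{a,h}\}$ and noting that the second infimum is $\ge0>\widetilde{\theta}_h$, we conclude $\inf\{I_{a,h}(u):u\in H^1(\mathbb{R}^N)\}=\inf\{I_{a,h}(u):u\in B_{R_h}\}=\widetilde{\theta}_h<0$.

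For the last statement I would just repeat the argument with $h\equiv0$: the lower bound becomes $I_{a,0}(u)\ge\phi_0(\Vert u\Vert)$, where $\phi_0$ is $\phi$ with the $t^q$ term removed, and since $\phi_0\ge\phi$ the same $R_h$ still gives $I_{a,0}(u)>0$ for $\Vert u\Vert\ge R_h$; moreover $\widetilde{\theta}_0<0$ because the computation in Lemma~\ref{lem2} is unaffected by dropping the $h$-term (it contributes $t^{-(2-q)}\int_{\mathbb{R}^N}h|\phi|^q\,dx\to0$ along the test direction $\phi$ of $(D_3)$), so $I_{0,0}(t\phi)\to-\infty$ and hence $I_{a,0}(e')<0$ for some $e'$ and $a$ small, which yields $\inf\{I_{a,0}(u):u\in H^1(\mathbb{R}^N)\}=\inf\{I_{a,0}(u):u\in B_{R_h}\}<0$. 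I expect the only delicate step to be the first one — arranging the exponents so that the superquadratic term $t^{2(1+\delta_0)}$ produced by $\widehat m$ dominates the Sobolev term $t^r$; this is precisely why $(D_6)$ (and not merely $(D_5)$) is assumed, and it is what forces the restriction $r<2+2\delta_0$.
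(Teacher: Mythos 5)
Your proof is correct and follows essentially the same route as the paper: use $(D_6)$ to produce the dominating term $\frac{ad_0}{2+2\delta_0}\Vert u\Vert^{2+2\delta_0}$, bound $\int G(x,u)\,dx$ and the $h$-term as in Lemma \ref{lem1} with $r$ chosen below $2+2\delta_0$, deduce a coercive one-variable lower bound giving $D_a$ and $R_h$, and invoke Lemma \ref{lem2} for the negativity of the infimum. The only cosmetic difference is that you argue the one-variable facts directly by continuity and coercivity, whereas the paper drops the quadratic term and cites Lemma \ref{m2}.
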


\begin{proof}
By conditions ${(D_{1})}-\left( D_{3}\right) $ and $\left( D_{6}\right) ,$
and noticing that $\lim_{s\rightarrow +\infty }\frac{g\left( x,s\right) }{%
s^{r-1}}=0$ uniformly in $x\in \mathbb{R}^{N}$ for any fixed $2<r<\min
\left\{ 2+\delta _{0},2^{\ast }\right\} $ ($2^{\ast }=\infty $ if $N=1,2$
and $2^{\ast }=\frac{2N}{N-2}$ if $N\geq 3$), it is easy to see that for
every $\epsilon _{0}>0$ with $\left\vert p_{1}^{+}\right\vert _{\infty
}+\epsilon _{0}<b$, there is a $C_{0}=C\left( \epsilon _{0},r,g\right) >0$
such that%
\begin{eqnarray*}
I_{a,h}(u) &=&\frac{a}{2}\widehat{m}\left( \Vert u\Vert ^{2}\right) +\frac{b%
}{2}\Vert u\Vert ^{2}-\int_{\mathbb{R}^{N}}G(x,u)dx-\int_{\mathbb{R}%
^{N}}h\left\vert u^{+}\right\vert ^{q}dx \\
&\geq &\frac{ad_{0}}{2+2\delta _{0}}\Vert u\Vert ^{2+2\delta _{0}}+\frac{%
b-\left\vert p_{1}^{+}\right\vert _{\infty }-\epsilon _{0}}{2}\Vert u\Vert
^{2}-\frac{C_{0}}{rS_{r}^{r}}\Vert u\Vert ^{r}-S_{2}^{-q}\left\vert
h^{+}\right\vert _{L^{2/\left( 2-q\right) }}\left\Vert u\right\Vert ^{q} \\
&\geq &\frac{ad_{0}}{2+2\delta _{0}}\Vert u\Vert ^{2+2\delta _{0}}-\frac{%
C_{0}}{rS_{r}^{r}}\Vert u\Vert ^{r}-S_{2}^{-q}\left\vert h^{+}\right\vert
_{L^{2/\left( 2-q\right) }}\left\Vert u\right\Vert ^{q}
\end{eqnarray*}%
for all $u\in H^{1}(\mathbb{R}^{N})$, where we have used $\left( \ref{3.4}%
\right) $ and the condition $\left( D_{6}\right) .$ Then%
\begin{equation*}
I_{a,h}(u)\geq \frac{ad_{0}}{2+2\delta _{0}}\left( \Vert u\Vert ^{2+2\delta
_{0}}-\frac{\left( 2+2\delta _{0}\right) \left\vert h^{+}\right\vert
_{L^{2/\left( 2-q\right) }}}{ad_{0}S_{2}^{q}}\left\Vert u\right\Vert ^{q}-%
\frac{\left( 2+2\delta _{0}\right) C_{0}}{ard_{0}S_{r}^{r}}\Vert u\Vert
^{r}\right) .
\end{equation*}%
We now apply to Lemma \ref{m2} above with%
\begin{equation*}
\overline{A}=\frac{\left( 2+2\delta _{0}\right) \left\vert h^{+}\right\vert
_{L^{2/\left( 2-q\right) }}}{ad_{0}S_{2}^{q}}>0\text{ and }\overline{B}=%
\frac{\left( 2+2\delta _{0}\right) C_{0}}{ard_{0}S_{r}^{r}}>0,
\end{equation*}%
then for every $a\in \left( 0,a^{\ast }\right) $, there exist $D_{a}<0$ and $%
R_{h}>t_{\overline{B}}=\left[ \overline{B}\left( r-q\right) /\left(
k-q\right) \right] ^{1/\left( k-r\right) }$ such that
\begin{equation*}
I_{a,h}(u)\geq D_{a}\text{ for all }u\in H^{1}(\mathbb{R}^{N}),
\end{equation*}%
and
\begin{equation}
I_{a,h}(u)>0\text{ for all }u\in H^{1}(\mathbb{R}^{N})\text{ with }%
\left\Vert u\right\Vert \geq R_{h},  \label{3.11}
\end{equation}%
which implies that%
\begin{equation*}
\inf \left\{ I_{a,h}(u):u\in H^{1}(\mathbb{R}^{N})\right\} \geq D_{a}.
\end{equation*}%
Moreover, by Lemma \ref{lem2}, for any $a\in \left( 0,a^{\ast }\right) $%
\begin{equation}
\inf \left\{ I_{a,h}(u):u\in H^{1}(\mathbb{R}^{N})\right\} <0.  \label{3.12}
\end{equation}%
Therefore, combining $\left( \ref{3.11}\right) $ and $\left( \ref{3.12}%
\right) ,$ we see that%
\begin{equation*}
\inf \left\{ I_{a,h}(u):u\in H^{1}(\mathbb{R}^{N})\right\} =\inf \left\{
I_{a,h}(u):u\in \overline{B}_{R_{h}}\right\} <0.
\end{equation*}%
This completes the proof.
\end{proof}

By Theorem \ref{t2} and Lemmas \ref{lem1}, \ref{lem2}, we obtain that there
is a sequence $\{u_{n}\}\subset H^{1}(\mathbb{R}^{N})$ such that
\begin{equation}
I_{a,h}(u_{n})\rightarrow \alpha >0\quad \text{and}\quad (1+\Vert u_{n}\Vert
)\Vert I_{a,h}^{\prime }(u_{n})\Vert _{H^{-1}\left( \mathbb{R}^{N}\right)
}\rightarrow 0,\quad \text{as}\ n\rightarrow \infty .  \label{3.5}
\end{equation}

\begin{lemma}
\label{lem3}Suppose that conditions $\left( D_{1}\right) ,\left(
D_{2}\right) $ and $\left( D_{5}\right) $ hold. Then $\{u_{n}\}$ defined in $%
(\ref{3.5})$ is bounded in $H^{1}(\mathbb{R}^{N}).$
\end{lemma}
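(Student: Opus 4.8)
The plan is to argue by contradiction, using only the Cerami-type gradient estimate in $(\ref{3.5})$ together with $(D_{5})$ (the energy bound $I_{a,h}(u_{n})\to\alpha$ will not even be needed). Suppose $\{u_{n}\}$ is unbounded in $H^{1}(\mathbb{R}^{N})$; passing to a subsequence we may assume $t_{n}:=\|u_{n}\|\to\infty$. Since $(\ref{3.5})$ gives $|\langle I_{a,h}^{\prime}(u_{n}),u_{n}\rangle|\le\|u_{n}\|\,\|I_{a,h}^{\prime}(u_{n})\|_{H^{-1}(\mathbb{R}^{N})}\le(1+\|u_{n}\|)\|I_{a,h}^{\prime}(u_{n})\|_{H^{-1}(\mathbb{R}^{N})}\to0$, and using that $u_{n}^{+}u_{n}=(u_{n}^{+})^{2}$ a.e. together with $g(x,s)\equiv0$ for $s\le0$, we obtain
\[
\bigl[am(t_{n}^{2})+b\bigr]\,t_{n}^{2}=\int_{\mathbb{R}^{N}}g(x,u_{n})u_{n}\,dx+\int_{\mathbb{R}^{N}}h\,|u_{n}^{+}|^{q}\,dx+o(1).
\]

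Next I would divide by $t_{n}^{2}$ and control the right-hand side. For the last integral, Hölder's inequality and the Sobolev embedding give $\bigl|\int_{\mathbb{R}^{N}}h\,|u_{n}^{+}|^{q}\,dx\bigr|\le S_{2}^{-q}\,|h^{+}|_{L^{2/(2-q)}}\,\|u_{n}\|^{q}$, so after division it contributes $O(t_{n}^{q-2})=o(1)$ since $q<2$. For the first integral, conditions $(D_{1})$ and $(D_{2})$ supply (this is exactly the structural control on $g$ already used to derive $(\ref{6})$ in Lemma~\ref{lem1}) a constant $\bar{C}>0$ with $g(x,s)\le\bar{C}s$ for all $s\ge0$ and $x\in\mathbb{R}^{N}$; hence $\int_{\mathbb{R}^{N}}g(x,u_{n})u_{n}\,dx\le\bar{C}\int_{\mathbb{R}^{N}}u_{n}^{2}\,dx\le\bar{C}\,\|u_{n}\|^{2}=\bar{C}\,t_{n}^{2}$. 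Dividing the displayed identity by $t_{n}^{2}$ therefore yields $am(t_{n}^{2})+b\le\bar{C}+o(1)$, and since $a>0$ this gives $m(t_{n}^{2})\le(\bar{C}-b)/a+o(1)$.

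Finally, $t_{n}^{2}\to\infty$, so $(D_{5})$ forces $m(t_{n}^{2})\to+\infty$, contradicting the bound just obtained; hence $\{u_{n}\}$ is bounded in $H^{1}(\mathbb{R}^{N})$. I expect the only delicate point to be the uniform-in-$x$ linear bound $g(x,s)\le\bar{C}s$ for $s\ge0$: it is what turns the integral identity into a pointwise bound on $m(t_{n}^{2})$, and it is also what makes $(D_{5})$ decisive here, reconciling the asymptotically linear growth of $f$ with boundedness once the Kirchhoff coefficient $am(\cdot)$ is switched on. Everything else is a one-line Hölder/Sobolev estimate.
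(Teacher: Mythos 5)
Your proof is correct and is essentially the paper's argument: contradiction assuming $\|u_n\|\to\infty$, the identity $\langle I_{a,h}'(u_n),u_n\rangle/\|u_n\|^2=o(1)$ from the Cerami bound, the uniform bound $g(x,s)\le \bar C s$ for $s\ge 0$ coming from $(D_1)$–$(D_2)$ (the paper asserts the same bound), the H\"older/Sobolev estimate making the $h$-term $O(\|u_n\|^{q-2})$, and then $(D_5)$ forcing $m(\|u_n\|^2)\to+\infty$ to produce the contradiction. The only cosmetic difference is that the paper phrases the normalization via $w_n=u_n/\|u_n\|$, which changes nothing of substance.
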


\begin{proof}
By contradiction, let $\Vert u_{n}\Vert \rightarrow +\infty $ as $%
n\rightarrow \infty $. Define $w_{n}:=\frac{u_{n}}{\Vert u_{n}\Vert }.$
Clearly, $w_{n}$ is bounded in $H^{1}(\mathbb{R}^{N})$ and there is $w\in
H^{1}(\mathbb{R}^{N})$ such that, up to a subsequence,
\begin{equation*}
w_{n}\rightharpoonup w\text{ weakly in }H^{1}(\mathbb{R}^{N})\text{ and }%
w_{n}\rightarrow w\text{ strongly in}\ L_{loc}^{2}(\mathbb{R}^{N})\text{ as}%
\ n\rightarrow \infty .
\end{equation*}%
It follows from $(\ref{3.5})$ that%
\begin{equation*}
\frac{\langle I_{a,h}^{\prime }(u_{n}),u_{n}\rangle }{\Vert u_{n}\Vert ^{2}}%
=o(1),
\end{equation*}%
that is,
\begin{equation}
o(1)=am\left( \Vert u_{n}\Vert ^{2}\right) +b-\int_{\mathbb{R}^{N}}\frac{%
g(x,u_{n})}{u_{n}}w_{n}^{2}dx-\frac{\int_{\mathbb{R}^{N}}h\left( x\right)
\left\vert w_{n}^{+}\right\vert ^{q}dx}{\Vert u_{n}\Vert ^{2-q}},
\label{3.6}
\end{equation}%
where $o(1)$ denotes a quantity which goes to zero as $n\rightarrow \infty $
and $\delta _{0}$ is as in the condition $\left( D_{5}\right) .$ By
conditions $\left( D_{1}\right) $ and $\left( D_{2}\right) $ and $h\in
L^{2/\left( 2-q\right) }\left( \mathbb{R}^{N}\right) $, there exist $%
C_{1},C_{2}>0$ such that
\begin{equation*}
\frac{g(x,s)}{s}\leq C_{1}\text{ for all}\ s\in \mathbb{R}
\end{equation*}%
and%
\begin{equation*}
\int_{\mathbb{R}^{N}}h\left( x\right) \left\vert w_{n}^{+}\right\vert
^{q}dx\leq C_{2}\left\vert h^{+}\right\vert _{L^{2/\left( 2-q\right) }},
\end{equation*}%
which implies that $\int_{\mathbb{R}^{N}}\frac{g(x,u_{n})}{u_{n}}w_{n}^{2}dx$
and $\int_{\mathbb{R}^{N}}h\left( x\right) \left\vert w_{n}^{+}\right\vert
^{q}dx$ are bounded in $H^{1}(\mathbb{R}^{N}).$ Moreover, by the condition $%
\left( D_{5}\right) ,$%
\begin{equation*}
m\left( \Vert u_{n}\Vert ^{2}\right) \rightarrow +\infty \text{ as }%
n\rightarrow \infty .
\end{equation*}%
So the above equation $\left( \ref{3.6}\right) $ is a contradiction.
Therefore, $\{u_{n}\}$ is bounded in $H^{1}(\mathbb{R}^{N}).$ This completes
the proof.
\end{proof}

\section{Proof of Theorem \protect\ref{t1-1}}

To prove that the Cerami sequence $\{u_{n}\}$ in $(\ref{3.5})$ converges to
a nonzero critical point of $I_{a,h}$, the following compactness lemma is
useful.

\begin{lemma}
\label{lem4-1}Suppose that conditions $\left( D_{1}\right) -\left(
D_{4}\right) $ hold and $\left\{ u_{n}\right\} $ is a $\left( PS\right)
_{\beta }$-sequence for $I_{a,h}$ in $H^{1}(\mathbb{R}^{N}),$ that is%
\begin{equation*}
I_{a,h}(u_{n})\rightarrow \beta \quad \text{and}\quad \Vert I_{a,h}^{\prime
}(u_{n})\Vert _{H^{-1}\left( \mathbb{R}^{N}\right) }\rightarrow 0,\quad
\text{as}\ n\rightarrow \infty .
\end{equation*}%
If $\left\{ u_{n}\right\} $ is a bounded sequence in $H^{1}(\mathbb{R}^{N})$%
, then for any $\epsilon >0$, there exist $R(\epsilon )>R_{0}$ and $%
n(\epsilon )>0$ such that $\int_{|x|\geq R}\left( |\nabla
u_{n}|^{2}+u_{n}^{2}\right) dx\leq \epsilon $ for all $n\geq n(\epsilon )$
and $R\geq R(\epsilon ).$
\end{lemma}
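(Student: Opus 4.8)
The plan is to establish a uniform-in-$n$ decay estimate on the "tails'' of the $H^1$-norms of the Cerami sequence, exploiting the sublinear/linear structure of $g$ far from the origin (condition $(D_4)$) together with the boundedness of $\{u_n\}$. The natural tool is a cut-off test function. Fix a smooth function $\zeta_R\in C^\infty(\mathbb{R}^N)$ with $\zeta_R\equiv 0$ on $B_R$, $\zeta_R\equiv 1$ on $\mathbb{R}^N\setminus B_{2R}$, $0\le \zeta_R\le 1$, and $|\nabla\zeta_R|\le C/R$. Testing $I_{a,h}'(u_n)$ against $\zeta_R u_n$ gives
\begin{equation*}
\bigl[am(\|u_n\|^2)+b\bigr]\int_{\mathbb{R}^N}\!\bigl(|\nabla u_n|^2\zeta_R+u_n^2\zeta_R+u_n\nabla u_n\cdot\nabla\zeta_R\bigr)dx
=\int_{\mathbb{R}^N}\!g(x,u_n)\zeta_R u_n\,dx+\int_{\mathbb{R}^N}\!h|u_n^+|^{q}\zeta_R\,dx+o(1),
\end{equation*}
where the $o(1)$ comes from $\|I_{a,h}'(u_n)\|_{H^{-1}}\to 0$ and $\|\zeta_R u_n\|\le C$ uniformly in $R$ and $n$.

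**Key steps, in order.**
First I would bound the mixed gradient term $\int u_n\nabla u_n\cdot\nabla\zeta_R\,dx$ by $\tfrac{C}{R}\|u_n\|^2\le C/R$ using Cauchy--Schwarz and $|\nabla\zeta_R|\le C/R$; this is small once $R$ is large, uniformly in $n$. Next, for the right-hand side: since $R\ge R_0$, condition $(D_4)$ gives $g(x,u_n)u_n\le \min\{1,b\}\,u_n^2$ on $\{|x|\ge R_0\}$, so
\begin{equation*}
\int_{\mathbb{R}^N}g(x,u_n)\zeta_R u_n\,dx\le \min\{1,b\}\int_{\mathbb{R}^N}u_n^2\zeta_R\,dx.
\end{equation*}
For the $h$-term, Hölder's inequality with exponents $\tfrac{2}{2-q}$ and $\tfrac{2}{q}$ yields
\begin{equation*}
\int_{\mathbb{R}^N}h|u_n^+|^q\zeta_R\,dx\le \Bigl(\int_{|x|\ge R}|h|^{2/(2-q)}dx\Bigr)^{(2-q)/2}|u_n|_2^q\le \varepsilon(R)\,C,
\end{equation*}
where $\varepsilon(R)\to 0$ as $R\to\infty$ because $h\in L^{2/(2-q)}$. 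Combining, since $am(\|u_n\|^2)+b\ge b\ge\min\{1,b\}$ and $am(\|u_n\|^2)+b\ge 1$ whenever $b\ge 1$ — more precisely, since $am(\|u_n\|^2)\ge 0$ we have $am(\|u_n\|^2)+b - \min\{1,b\}\ge b-\min\{1,b\}\ge 0$, and in fact $am(\|u_n\|^2)+b\ge \max\{\min\{1,b\},\text{something}\}$ — the coefficient in front of $\int u_n^2\zeta_R$ on the left dominates $\min\{1,b\}$, so that term absorbs the first term on the right. Then
\begin{equation*}
\int_{|x|\ge 2R}\bigl(|\nabla u_n|^2+u_n^2\bigr)dx\le \int_{\mathbb{R}^N}\bigl(|\nabla u_n|^2+u_n^2\bigr)\zeta_R\,dx\le \frac{C}{am(\|u_n\|^2)+b}\Bigl(\frac{1}{R}+\varepsilon(R)+o(1)\Bigr),
\end{equation*}
and since $am(\|u_n\|^2)+b\ge b>0$, the right side can be made $\le\epsilon$ by choosing $R(\epsilon)>R_0$ large and $n(\epsilon)$ large; relabel $2R$ as $R$.

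**Main obstacle.** The one point needing care is whether the coefficient $am(\|u_n\|^2)+b$ on the left genuinely dominates $\min\{1,b\}$ coming from $(D_4)$ after moving that term to the left: we need $am(\|u_n\|^2)+b-\min\{1,b\}$ to stay bounded below by a positive constant (or at least by zero, with the surviving $b-\min\{1,b\}$ or the positive lower bound $b$ itself handling the $u_n^2$ coefficient). Since $m\ge 0$ and $a>0$, one has $am(\|u_n\|^2)+b\ge b$; if $b\le 1$ then $\min\{1,b\}=b$ and the difference is $am(\|u_n\|^2)\ge 0$, so the $\int u_n^2\zeta_R$ term is simply absorbed with a nonnegative leftover, while the $|\nabla u_n|^2\zeta_R$ term retains the full coefficient $am(\|u_n\|^2)+b\ge b>0$; if $b>1$ then $\min\{1,b\}=1<b$ and again the difference is bounded below by $b-1>0$. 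Either way one divides through by a constant bounded below by $\min\{b,b-1,\dots\}$ — in all cases by a fixed positive number depending only on $b$ — so the estimate closes. This is precisely the role of $(D_4)$: it forces $g$ to be \emph{subcritical with respect to the linear part} $b u^2$ at spatial infinity, which is exactly what makes the tail absorbable.
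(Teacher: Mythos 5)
Your overall strategy coincides with the paper's: a cut-off $\zeta_R$ vanishing on $B_R$ and equal to $1$ outside $B_{2R}$, testing $I_{a,h}'(u_n)$ with $\zeta_R u_n$ (legitimate because $\|\zeta_R u_n\|\le C\|u_n\|$ uniformly in $R\ge R_0$), Cauchy--Schwarz (the paper uses Young) for the cross term $\int u_n\nabla u_n\cdot\nabla\zeta_R\,dx$, the smallness of the $L^{2/(2-q)}$-tail of $h$ for the $h$-term, and $(D_4)$ for the $g$-term. The genuine gap is in the absorption step. After moving $\int g(x,u_n)u_n\zeta_R\,dx\le\min\{1,b\}\int u_n^2\zeta_R\,dx$ to the left, the coefficient of $\int u_n^2\zeta_R\,dx$ is $am(\|u_n\|^2)+b-\min\{1,b\}$. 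When $0<b\le 1$ this equals $am(\|u_n\|^2)$, which under $(D_1)$--$(D_4)$ alone is only known to be nonnegative ($m$ is merely continuous with $m\ge 0$; no lower bound like $(D_5)$ or $(D_6)$ is assumed in this lemma), so it may vanish. Your own case discussion concedes this (``absorbed with a nonnegative leftover''), yet your final display divides the whole tail, including the $u_n^2$ part, by $am(\|u_n\|^2)+b$, and your concluding sentence claims a positive constant ``depending only on $b$'' in all cases; that is false for $b\le 1$. With a zero leftover you only control $\int|\nabla u_n|^2\zeta_R\,dx$, and there is no Poincar\'e-type inequality on exterior domains letting you recover the $L^2$ tail from the gradient tail, so the conclusion $\int_{|x|\ge R}(|\nabla u_n|^2+u_n^2)\,dx\le\epsilon$ does not follow from your estimate.

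This is precisely the point where the paper's proof takes a different (stronger) step: it splits into the cases $b\ge 1$ and $0<b<1$ and asserts $\int|g(x,u_n)u_n|\zeta_R\,dx\le\eta_1\int u_n^2\zeta_R\,dx$ with $\eta_1\in(0,1)$, respectively $\le b\eta_2\int u_n^2\zeta_R\,dx$ with $\eta_2\in(0,1)$, i.e.\ a bound on $g(x,s)/s$ for $|x|\ge R_0$ that is \emph{strictly} below $\min\{1,b\}$; the leftover coefficient is then $1-\eta_1$ (resp.\ $b(1-\eta_2)$), which is positive and closes the estimate (whether that strictness really follows from the literal, non-strict statement of $(D_4)$ is a separate issue with the paper itself, but the paper's argument is built on it). To repair your proof you need the same strict gap, or some other uniform positive lower bound on the coefficient multiplying $\int u_n^2\zeta_R\,dx$; the non-strict form of $(D_4)$ you invoke is not enough when $b\le 1$. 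The rest of your argument (cross term, $h$-tail, relabeling $2R$ as $R$) is fine and matches the paper.
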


\begin{proof}
Let $\xi _{R}:\mathbb{R}^{3}\rightarrow \lbrack 0,1]$ be a smooth function
such that
\begin{equation}
\xi _{R}(x)=\left\{
\begin{array}{cc}
0, & \quad 0\leq |x|\leq R, \\
1, & \quad |x|\geq 2R,%
\end{array}%
\right.  \label{3.20}
\end{equation}%
and for some constant $C>0$ (independent of $R$),
\begin{equation}
|\nabla \xi _{R}(x)|\leq \frac{C}{R},\quad \text{for all}\ x\in \mathbb{R}%
^{3}.  \label{3.21}
\end{equation}%
Then, for all $n\in \mathbb{N}$ and $R\geq R_{0}$, we have
\begin{eqnarray*}
\int_{\mathbb{R}^{N}}|\nabla (u_{n}\xi _{R})|^{2}dx &=&\int_{\mathbb{R}%
^{N}}|\nabla u_{n}|^{2}\xi _{R}^{2}dx+\int_{\mathbb{R}^{N}}|u_{n}|^{2}|%
\nabla \xi _{R}|^{2}dx \\
&\leq &\int_{R<|x|<2R}|\nabla u_{n}|^{2}dx+\int_{|x|>2R}|\nabla u_{n}|^{2}dx+%
\frac{C^{2}}{R^{2}}\int_{\mathbb{R}^{N}}u_{n}^{2}dx \\
&\leq &(2+\frac{C^{2}}{R^{2}})\Vert u_{n}\Vert ^{2}\leq (2+\frac{C^{2}}{%
R_{0}^{2}})\Vert u_{n}\Vert ^{2}.
\end{eqnarray*}%
This implies that
\begin{equation}
\Vert u_{n}\xi _{R}\Vert \leq (3+\frac{C^{2}}{R_{0}^{2}})^{\frac{1}{2}}\Vert
u_{n}\Vert ,\text{ for all }n\in \mathbb{N}\text{ and }R\geq R_{0}.
\label{3.22}
\end{equation}%
Since $\left\{ u_{n}\right\} $ is a bounded sequence in $H^{1}(\mathbb{R}%
^{N})$, it follows that $\Vert I_{a,h}^{\prime }(u_{n})\Vert _{H^{-1}}\Vert
u_{n}\Vert \rightarrow 0$ as $n\rightarrow \infty $. So, for any $\epsilon
>0 $, there exists $n(\epsilon )>0$ such that
\begin{equation}
\Vert I_{a,h}^{\prime }(u_{n})\Vert _{H^{-1}}\left\Vert u_{n}\right\Vert
\leq \epsilon (3+\frac{C^{2}}{R_{0}^{2}})^{-\frac{1}{2}},\text{ for all }%
n>n(\epsilon ).  \label{3.23}
\end{equation}%
Hence, it follows from $(\ref{3.22})$ and $(\ref{3.23})$ that
\begin{equation}
|\langle I_{a,h}^{\prime }(u_{n}),u_{n}\xi _{R}\rangle |\leq \Vert
I_{a,h}^{\prime }(u_{n})\Vert _{H^{-1}}\Vert u_{n}\xi _{R}\Vert \leq
\epsilon ,  \label{3.24}
\end{equation}%
for all $n>n(\epsilon )$ and $R>R_{0}.$ Note that%
\begin{eqnarray}
&&\langle I_{a,h}^{\prime }(u_{n}),u_{n}\xi _{R}\rangle  \notag \\
&=&am\left( \Vert u_{n}\Vert ^{2}\right) \left( \int_{\mathbb{R}^{N}}|\nabla
u_{n}|^{2}\xi _{R}dx+\int_{\mathbb{R}^{N}}u_{n}^{2}\xi _{R}dx+\int_{\mathbb{R%
}^{N}}u_{n}\nabla u_{n}\nabla \xi _{R}dx\right)  \notag \\
&&+b\left( \int_{\mathbb{R}^{N}}|\nabla u_{n}|^{2}\xi _{R}dx+\int_{\mathbb{R}%
^{N}}u_{n}^{2}\xi _{R}dx+\int_{\mathbb{R}^{N}}u_{n}\nabla u_{n}\nabla \xi
_{R}dx\right)  \notag \\
&&-\int_{\mathbb{R}^{N}}g(x,u_{n})u_{n}\xi _{R}dx-\int_{\mathbb{R}%
^{N}}h\left\vert u_{n}^{+}\right\vert ^{q}\xi _{R}dx  \notag \\
&\geq &b\left( \int_{\mathbb{R}^{N}}|\nabla u_{n}|^{2}\xi _{R}dx+\int_{%
\mathbb{R}^{N}}u_{n}^{2}\xi _{R}dx\right) +(am\left( \Vert u_{n}\Vert
^{2}\right) +b)\int_{\mathbb{R}^{N}}u_{n}\nabla u_{n}\nabla \xi _{R}dx
\notag \\
&&-\int_{\mathbb{R}^{N}}g(x,u_{n})u_{n}\xi _{R}dx-\int_{\mathbb{R}%
^{N}}h\left\vert u_{n}^{+}\right\vert ^{q}\xi _{R}dx.  \label{3.25}
\end{eqnarray}%
For any $\epsilon >0,$ there exists $R_{1}\left( \epsilon \right) >R_{0}$
such that
\begin{equation}
\frac{1}{R^{2}}\leq \frac{4\epsilon ^{2}}{C^{2}}\text{ for all}\ R\geq
R(\epsilon ).  \label{3.26}
\end{equation}%
By $(\ref{3.26})$ and the Young inequality, we get, for all $n\in \mathbb{N}$
and $R\geq R(\epsilon )$,%
\begin{eqnarray}
\int_{\mathbb{R}^{N}}|u_{n}\nabla u_{n}\nabla \xi _{R}|dx &\leq &\epsilon
\int_{\mathbb{R}^{N}}|\nabla u_{n}|^{2}dx+\frac{1}{4\epsilon }\int_{|x|\leq
2R}|u_{n}|^{2}\frac{C^{2}}{R^{2}}dx  \notag \\
&\leq &\epsilon \int_{\mathbb{R}^{N}}|\nabla u_{n}|^{2}dx+\epsilon
\int_{|x|\leq 2R}|u_{n}|^{2}dx  \notag \\
&\leq &\epsilon \Vert u_{n}\Vert ^{2}.  \label{3.27}
\end{eqnarray}%
Moreover, since $h\in L^{2/\left( 2-q\right) }\left( \mathbb{R}^{N}\right) ,$
using the Egorov theorem and the H\"{o}lder inequality, there exists $%
R_{2}\left( \epsilon \right) >R_{0}$ such that for all $n\in \mathbb{N}$ and
$R\geq R_{2}(\epsilon )$, we have%
\begin{equation}
\int_{\mathbb{R}^{N}}h\left\vert u_{n}^{+}\right\vert ^{q}\xi _{R}<\epsilon .
\label{3.18}
\end{equation}%
Take $R(\epsilon )=\max \left\{ R_{1}(\epsilon ),R_{2}(\epsilon )\right\} .$
Now we consider two cases. \newline
\textbf{Case (i):} $b\geq 1.$ By $(D_{1}),(D_{4})$ and $(\ref{3.21})$, there
exists $\eta _{1}\in (0,1)$ such that, for all $n\in \mathbb{N}$ and $R\geq
R_{0},$
\begin{equation*}
\int_{\mathbb{R}^{N}}|g(x,u_{n})u_{n}\xi _{R}|dx\leq \eta _{1}\int_{\mathbb{R%
}^{N}}u_{n}^{2}\xi _{R}dx.
\end{equation*}%
Using this, together with $(\ref{3.25}),\left( \ref{3.27}\right) $ and $%
\left( \ref{3.18}\right) $, for all $n\in \mathbb{N}$ and $R\geq R(\epsilon
)\geq R_{0}$, we see that%
\begin{eqnarray}
&&\langle I_{a,h}^{\prime }(u_{n}),u_{n}\xi _{R}\rangle  \notag \\
&\geq &b\left( \int_{\mathbb{R}^{N}}|\nabla u_{n}|^{2}\xi _{R}dx+\int_{%
\mathbb{R}^{N}}u_{n}^{2}\xi _{R}dx\right)  \notag \\
&&+(am\left( \Vert u_{n}\Vert ^{2}\right) +b)\int_{\mathbb{R}%
^{N}}u_{n}\nabla u_{n}\nabla \xi _{R}dx-\int_{\mathbb{R}^{N}}g(x,u_{n})u_{n}%
\xi _{R}dx-\int_{\mathbb{R}^{N}}h\left\vert u_{n}^{+}\right\vert ^{q}\xi _{R}
\notag \\
&\geq &\int_{\mathbb{R}^{N}}|\nabla u_{n}|^{2}\xi _{R}dx+\int_{\mathbb{R}%
^{N}}u_{n}^{2}\xi _{R}dx+(am\left( \Vert u_{n}\Vert ^{2}\right) +1)\int_{%
\mathbb{R}^{N}}u_{n}\nabla u_{n}\nabla \xi _{R}dx  \notag \\
&&-\int_{\mathbb{R}^{N}}g(x,u_{n})u_{n}\xi _{R}dx-\int_{\mathbb{R}%
^{N}}h\left\vert u_{n}^{+}\right\vert ^{q}\xi _{R}  \notag \\
&\geq &\int_{\mathbb{R}^{N}}|\nabla u_{n}|^{2}\xi _{R}dx+(1-\eta _{1})\int_{%
\mathbb{R}^{N}}u_{n}^{2}\xi _{R}dx-\epsilon \left[ \left( am\left( \Vert
u_{n}\Vert ^{2}\right) +1\right) \Vert u_{n}\Vert ^{2}+1\right] .
\label{3.29}
\end{eqnarray}%
Since $\{u_{n}\}$ is bounded in $H^{1}(\mathbb{R}^{N})$, it follows from $(%
\ref{3.24})$ and $(\ref{3.29})$ that there exists $C_{2}>0$ such that for
all $n\geq n(\epsilon )$ and $R\geq R(\epsilon ),$%
\begin{equation}
\int_{\mathbb{R}^{N}}|\nabla u_{n}|^{2}\xi _{R}dx+(1-\eta _{1})\int_{\mathbb{%
R}^{N}}u_{n}^{2}\xi _{R}dx\leq C_{2}\epsilon .  \label{3.30}
\end{equation}%
From $\eta _{1}\in (0,1)$ and $(\ref{3.20})$, it is easy to see that $(\ref%
{3.30})$ implies the final conclusion.\newline
\textbf{Case (ii)} $0<b<1.$ By $(D_{1}),(D_{4})$ and $(\ref{3.21})$, there
exists $\eta _{2}\in (0,1)$ such that, for all $n\in \mathbb{N}$ and $R\geq
R_{0},$%
\begin{equation*}
\int_{\mathbb{R}^{N}}|g(x,u_{n})u_{n}\xi _{R}|dx\leq b\eta _{2}\int_{\mathbb{%
R}^{N}}u_{n}^{2}\xi _{R}dx.
\end{equation*}%
Similar to the proof of Case (i), we have%
\begin{eqnarray*}
&&\langle I_{a,h}^{\prime }(u_{n}),u_{n}\xi _{R}\rangle \\
&\geq &b\left( \int_{\mathbb{R}^{N}}|\nabla u_{n}|^{2}\xi _{R}dx+\int_{%
\mathbb{R}^{N}}u_{n}^{2}\xi _{R}dx\right) \\
&&+\left[am\left( \Vert u_{n}\Vert ^{2}\right) +b\right]\int_{\mathbb{R}%
^{N}}u_{n}\nabla u_{n}\nabla \xi _{R}dx-\int_{\mathbb{R}^{N}}g(x,u_{n})u_{n}%
\xi _{R}dx-\int_{\mathbb{R}^{N}}h\left\vert u_{n}^{+}\right\vert ^{q}\xi _{R}
\\
&\geq &b\int_{\mathbb{R}^{N}}|\nabla u_{n}|^{2}\xi _{R}dx+b(1-\eta
_{2})\int_{\mathbb{R}^{N}}u_{n}^{2}\xi _{R}dx \\
&&-\epsilon \left[ (am\left( \Vert u_{n}\Vert ^{2}\right) +b)\Vert
u_{n}\Vert ^{2}+1\right] ,
\end{eqnarray*}%
and there exists $C_{3}>0$ such that for all $n\geq n(\epsilon )$ and $R\geq
R(\epsilon )$,
\begin{equation}
\int_{\mathbb{R}^{N}}|\nabla u_{n}|^{2}\xi _{R}dx+(1-\eta _{2})\int_{\mathbb{%
R}^{N}}u_{n}^{2}\xi _{R}dx\leq C_{3}\epsilon .  \label{3.33}
\end{equation}%
From $\eta _{2}\in (0,1)$ and $(\ref{3.20})$, it is easy to see that $(\ref%
{3.33})$ also implies the final conclusion.
\end{proof}

\begin{lemma}
\label{lem4}Suppose that conditions $\left( D_{1}\right) -\left(
D_{4}\right) $ hold. Let $\left\{ u_{n}\right\} $ be a sequence as in $%
\left( \ref{3.5}\right) .$ Then for any $\epsilon >0$, there exist $%
R(\epsilon )>R_{0}$ and $n(\epsilon )>0$ such that $\int_{|x|\geq R}\left(
|\nabla u_{n}|^{2}+u_{n}^{2}\right) dx\leq \epsilon $ for all $n\geq
n(\epsilon )$ and $R\geq R(\epsilon ).$
\end{lemma}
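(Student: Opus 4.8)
The plan is to deduce Lemma \ref{lem4} directly from Lemma \ref{lem4-1}, since the two conclusions are word for word the same and the only thing Lemma \ref{lem4-1} requires beyond conditions ${(D_{1})}-{(D_{4})}$ is that the sequence in question be bounded in $H^{1}(\mathbb{R}^{N})$. So the whole task is to check that the specific Cerami sequence $\left\{ u_{n}\right\}$ produced in $(\ref{3.5})$ is a \emph{bounded} $(PS)$-sequence at a positive level.

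First I would note that $\left\{ u_{n}\right\}$ is a $(PS)_{\alpha}$-sequence for $I_{a,h}$ with $\alpha >0$: the Cerami condition $(1+\Vert u_{n}\Vert )\Vert I_{a,h}^{\prime }(u_{n})\Vert _{H^{-1}}\rightarrow 0$ forces, in particular, $\Vert I_{a,h}^{\prime }(u_{n})\Vert _{H^{-1}}\rightarrow 0$, while $I_{a,h}(u_{n})\rightarrow \alpha$ by construction. Next I would invoke Lemma \ref{lem3} to conclude that $\left\{ u_{n}\right\}$ is bounded in $H^{1}(\mathbb{R}^{N})$; this is the point at which condition $\left( D_{5}\right)$ is used (it is available in every situation in which Lemma \ref{lem4} is applied, either directly or because $\left( D_{6}\right) $ forces $m\left( t\right) \rightarrow +\infty $). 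Consequently $\left\{ u_{n}\right\}$ is a bounded $(PS)_{\alpha}$-sequence for $I_{a,h}$.

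Finally I would apply Lemma \ref{lem4-1} with $\beta =\alpha$ to this sequence: its hypotheses are exactly conditions ${(D_{1})}-{(D_{4})}$ together with the boundedness just established, and its conclusion is precisely the statement of Lemma \ref{lem4}. This completes the argument. The only genuine obstacle is the one already flagged: the tail estimate really does need both $\Vert u_{n}\Vert $ and $m\left( \Vert u_{n}\Vert ^{2}\right) $ to be bounded — they enter through the Young-inequality bound for $\int_{\mathbb{R}^{N}}\left\vert u_{n}\nabla u_{n}\nabla \xi _{R}\right\vert dx$ and through the error term $\epsilon \left[ \left( am\left( \Vert u_{n}\Vert ^{2}\right) +1\right) \Vert u_{n}\Vert ^{2}+1\right] $ appearing in $(\ref{3.29})$ and $(\ref{3.33})$ — so one cannot circumvent importing the boundedness of the Cerami sequence from Lemma \ref{lem3}.
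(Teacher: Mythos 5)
Your proposal is correct and is essentially the paper's own argument: the authors likewise observe that the sequence from (\ref{3.5}) is a $(PS)_{\alpha}$-sequence, invoke Lemma \ref{lem3} for boundedness, and then conclude via Lemma \ref{lem4-1}. Your remark that Lemma \ref{lem3} requires $(D_{5})$ (not listed among the stated hypotheses $(D_{1})$--$(D_{4})$, but available in every application since $(D_{6})$ implies $(D_{5})$) is a fair observation about a gap the paper itself glosses over, not a defect of your proof.
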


\begin{proof}
Clearly, $\left\{ u_{n}\right\} $ is a $\left( PS\right) _{\alpha }$%
-sequence for $I_{a,h}$ in $H^{1}(\mathbb{R}^{N}).$ Moreover, by Lemma \ref%
{lem3}, $\left\{ u_{n}\right\} $ is a bounded sequence in $H^{1}(\mathbb{R}%
^{N}).$ Thus, by Lemma \ref{lem4-1}, it is easy to see that this lemma holds.
\end{proof}

\begin{theorem}
\label{t3}Suppose that conditions $\left( D_{1}\right) -\left( D_{5}\right) $
hold and $h\in L^{2/\left( 2-q\right) }\left( \mathbb{R}^{N}\right) .$ Let ${%
a}^{\ast }>0$ be as in Lemma \ref{lem2}. Then for each $a\in \left(
0,a^{\ast }\right) ,$ $I_{a,h}$ has a nonzero critical point $u_{0}\in H^{1}(%
\mathbb{R}^{3})$ such that $I_{a,h}\left( u_{0}\right) =\alpha >0.$
\end{theorem}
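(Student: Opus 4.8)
The goal is to upgrade the Cerami sequence $\{u_n\}$ from $(\ref{3.5})$ — which by Lemma \ref{lem3} is bounded — to an actual nonzero critical point of $I_{a,h}$ at the mountain pass level $\alpha>0$. The plan is to extract a weakly convergent subsequence $u_n\rightharpoonup u_0$ in $H^1(\mathbb{R}^N)$, show $u_n\to u_0$ strongly, and conclude that $u_0$ is a critical point with $I_{a,h}(u_0)=\alpha$; the positivity $\alpha>0$ then forces $u_0\neq 0$ since $I_{a,h}(0)=0$.

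First I would pass to a subsequence so that $u_n\rightharpoonup u_0$ weakly in $H^1(\mathbb{R}^N)$, $u_n\to u_0$ strongly in $L^r_{loc}(\mathbb{R}^N)$ for $2\le r<2^\ast$, and $u_n\to u_0$ a.e. The key point is to rule out loss of mass at infinity, and this is exactly what Lemma \ref{lem4} provides: given $\epsilon>0$, the tails $\int_{|x|\ge R}(|\nabla u_n|^2+u_n^2)\,dx$ are uniformly small for $n,R$ large. Combining the uniform tail decay with the local strong convergence, one upgrades to $u_n\to u_0$ strongly in $L^2(\mathbb{R}^N)$ and, using the subcritical growth $\lim_{s\to\infty}g(x,s)/s^{r-1}=0$ together with $(D_1)$, also $\int_{\mathbb{R}^N}g(x,u_n)u_n\,dx\to\int_{\mathbb{R}^N}g(x,u_0)u_0\,dx$ and $\int_{\mathbb{R}^N}h|u_n^+|^q\,dx\to\int_{\mathbb{R}^N}h|u_0^+|^q\,dx$ (the latter by the H\"older/Egorov argument already used in the proof of Lemma \ref{lem4-1}, since $h\in L^{2/(2-q)}$). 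Testing $I_{a,h}'(u_n)\to 0$ against $u_n$ and against $u_0$, and using that $\|u_n\|$ is bounded so $m(\|u_n\|^2)$ converges along a further subsequence to some $m_0\ge 0$, one deduces $\|u_n\|\to\|u_0\|$; together with weak convergence in the Hilbert space $H^1(\mathbb{R}^N)$ this gives $u_n\to u_0$ strongly.

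Once strong convergence holds, passing to the limit in $\langle I_{a,h}'(u_n),v\rangle\to 0$ for every $v\in H^1(\mathbb{R}^N)$ — using continuity of $m$, of $g(x,\cdot)$, and of the map $u\mapsto |u^+|^{q-2}u^+$ in the appropriate Lebesgue spaces — yields $I_{a,h}'(u_0)=0$. Moreover $I_{a,h}(u_n)\to I_{a,h}(u_0)$ by the same convergences, so $I_{a,h}(u_0)=\alpha>0=I_{a,h}(0)$, hence $u_0\neq 0$. Therefore $u_0$ is a nonzero critical point of $I_{a,h}$ with $I_{a,h}(u_0)=\alpha>0$, which is the desired conclusion; for $a\in(0,a^\ast)$ the whole machinery (mountain pass geometry from Lemmas \ref{lem1}, \ref{lem2}, boundedness from Lemma \ref{lem3}, compactness from Lemma \ref{lem4}) is available, so the argument applies uniformly.

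The main obstacle is the compactness step: because the problem is posed on all of $\mathbb{R}^N$ and $g$ is only "locally" sublinear/linear (so the nonlinearity need not be a compact perturbation globally), weak convergence alone does not give $I_{a,h}'(u_0)=0$, and one genuinely needs the tail estimate of Lemma \ref{lem4} — which in turn relies on condition $(D_4)$ controlling $g(x,s)/s$ for $|x|\ge R_0$ — to prevent vanishing at infinity. A secondary technical point is handling the nonlocal coefficient $m(\|u_n\|^2)$: one must argue along a subsequence where it converges and then check that the limit is consistent, i.e. that $m_0=m(\|u_0\|^2)$, which follows once strong convergence $\|u_n\|\to\|u_0\|$ is established together with continuity of $m$.
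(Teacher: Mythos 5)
Your proposal is correct and follows essentially the same route as the paper: bounded Cerami sequence from Lemma \ref{lem3}, weak limit plus the tail estimate of Lemma \ref{lem4} (resting on $(D_{4})$) to restore compactness, norm convergence $\Vert u_{n}\Vert \rightarrow \Vert u_{0}\Vert$ obtained by testing $I_{a,h}^{\prime }(u_{n})$ against $u_{n}$ and $u_{0}$, and then strong convergence yields $I_{a,h}^{\prime }(u_{0})=0$ and $I_{a,h}(u_{0})=\alpha >0$, forcing $u_{0}\neq 0$. The only cosmetic differences are that the paper establishes $\int_{\mathbb{R}^{N}}g(x,u_{n})u_{n}\,dx=\int_{\mathbb{R}^{N}}g(x,u_{n})u_{0}\,dx+o(1)$ directly and never needs $m(\Vert u_{n}\Vert ^{2})$ to converge, only the lower bound $am(\Vert u_{n}\Vert ^{2})+b\geq b>0$.
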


\begin{proof}
By Lemma \ref{lem3}, the sequence $\{u_{n}\}$ in $(\ref{3.5})$ is bounded in
$H^{1}(\mathbb{R}^{N}).$ We may assume that, up to a subsequence,
\begin{eqnarray}
u_{n} &\rightharpoonup &u_{0}\text{ weakly in }H^{1}(\mathbb{R}^{N});  \notag
\\
u_{n} &\rightarrow &u_{0}\text{ strongly in }L_{loc}^{r}\left( \mathbb{R}%
^{N}\right) \text{ for }2\leq r<2^{\ast };  \label{3.19} \\
u_{n} &\rightarrow &u_{0}\text{ a.e. in }\mathbb{R}^{N}  \notag
\end{eqnarray}%
for some $u_{0}\in H^{1}(\mathbb{R}^{N}).$ Moreover, since $h\in L^{2/\left(
2-q\right) }\left( \mathbb{R}^{N}\right) ,$ using the Egorov theorem and the
H\"{o}lder inequality, we have%
\begin{equation}
\int_{\mathbb{R}^{N}}h\left\vert u_{n}^{+}\right\vert ^{q-2}u_{n}^{+}\left(
u_{n}^{+}-u_{0}^{+}\right) =o\left( 1\right) .  \label{3.36}
\end{equation}%
In order to prove our conclusion, it is now sufficient to show that $%
\left\Vert u_{n}\right\Vert \rightarrow \left\Vert u_{0}\right\Vert $ as $%
n\rightarrow \infty $. Note that, by $(\ref{3.5})$,%
\begin{eqnarray*}
\langle I_{a,h}^{\prime }(u_{n}),u_{n}\rangle &=&\left[ am\left( \Vert
u_{n}\Vert ^{2}\right) +b\right] \int_{\mathbb{R}^{N}}(|\nabla
u_{n}|^{2}+u_{n}^{2})dx \\
&&-\int_{\mathbb{R}^{N}}g(x,u_{n})u_{n}dx-\int_{\mathbb{R}%
^{N}}h(x)\left\vert u_{n}^{+}\right\vert ^{q}dx \\
&=&o(1),
\end{eqnarray*}%
and%
\begin{eqnarray*}
\left\langle I_{a,h}^{\prime }(u_{n}),u_{0}\right\rangle &=&\left[ am\left(
\Vert u_{n}\Vert ^{2}\right) +b\right] \int_{\mathbb{R}^{N}}(\nabla
u_{n}\nabla u_{0}+u_{n}u_{0})dx \\
&&-\int_{\mathbb{R}^{N}}g(x,u_{n})u_{0}dx-\int_{\mathbb{R}%
^{N}}h(x)\left\vert u_{n}^{+}\right\vert ^{q-2}u_{n}^{+}u_{0}^{+}dx \\
&=&o(1).
\end{eqnarray*}%
Since $u_{n}\rightharpoonup u_{0}$ weakly in $H^{1}(\mathbb{R}^{N})$, it
follows that
\begin{equation}
\int_{\mathbb{R}^{N}}(\nabla u_{n}\nabla u_{0}+u_{n}u_{0})dx=\int_{\mathbb{R}%
^{N}}(|\nabla u_{0}|^{2}+u_{0}^{2})dx+o(1).  \label{3.37}
\end{equation}%
So by $\left( \ref{3.36}\right) $ and $\left( \ref{3.37}\right) $, to show $%
\left\Vert u_{n}\right\Vert \rightarrow \left\Vert u_{0}\right\Vert $ is
equivalent to prove that
\begin{equation}
\int_{\mathbb{R}^{N}}g(x,u_{n})u_{n}dx=\int_{\mathbb{R}%
^{N}}g(x,u_{n})u_{0}dx+o(1).  \label{3.34}
\end{equation}%
Indeed, for any $\epsilon >0$, by the condition $\left( D_{4}\right) ,$ the H%
\"{o}lder inequality and Lemma \ref{lem4}, for $n$ large enough, one has%
\begin{eqnarray*}
&&\int_{|x|\geq R(\epsilon )}g(x,u_{n})u_{n}dx-\int_{|x|\geq R(\epsilon
)}g(x,u_{n})u_{0}dx \\
&\leq &\int_{|x|\geq R(\epsilon )}\left\vert g(x,u_{n})\right\vert
\left\vert u_{n}-u_{0}\right\vert dx \\
&\leq &\min \left\{ 1,b\right\} \int_{|x|\geq R(\epsilon
)}|u_{n}||u_{n}-u_{0}|dx \\
&\leq &\min \left\{ 1,b\right\} \left( \int_{|x|\geq R(\epsilon
)}|u_{n}|^{2}dx\right) ^{\frac{1}{2}}\left( \int_{|x|\geq R(\epsilon
)}|u_{n}-u_{0}|^{2}dx\right) ^{\frac{1}{2}} \\
&\leq &\min \left\{ 1,b\right\} \epsilon .
\end{eqnarray*}%
Combining this and $\left( \ref{3.19}\right) $, the equation $(\ref{3.34})$
holds. This completes the proof.
\end{proof}

\bigskip

\textbf{Now we give the proof of Theorem \ref{t1-1}:} $\left( i\right) $
Theorem \ref{t3} shows the conclusion.\newline
$\left( ii\right) $ By Lemma \ref{lem5} and the Ekeland variational
principle, there exists a minimizing sequence $\{u_{n}\}\subset B_{R_{h}}$
such that
\begin{equation*}
I_{a,0}(u_{n})\rightarrow \widetilde{\theta }_{0}\text{ and }I_{a,0}^{\prime
}(u_{n})\rightarrow 0\text{ as }n\rightarrow \infty ,
\end{equation*}%
where
\begin{equation}
\widetilde{\theta }_{0}:=\inf \{I_{a,0}(u):u\in \overline{B}_{R_{h}}\}<0,
\label{4.3}
\end{equation}%
and $I_{a,0}=I_{a,h}$ for $h\equiv 0.$ Since $\left\{ u_{n}\right\} $ is a
bounded sequence, similar argument to the proof of Theorem \ref{t3}, there
exist a subsequence $\left\{ u_{n}\right\} $ and $u_{0}^{-}\in B_{R_{h}}$
such that $u_{n}\rightarrow u_{0}^{-}$ strongly in $H^{1}(\mathbb{R}^{N}),$
which implies that $I_{a,0}^{\prime }(u_{0}^{-})=0$ and $I_{a,0}(u_{0}^{-})=%
\widetilde{\theta }_{0}<0.$ This yields a nontrivial solution $u_{0}^{-}$ of
Equation $\left( K_{a,h}\right) .$ Moreover, by Theorem \ref{t3}, there
exists a nontrivial solution $u_{0}^{+}$ of Equation $\left( K_{a,h}\right) $
such that $I_{a,0}\left( u_{0}^{+}\right) =\alpha >0.$ Therefore,
\begin{equation*}
I_{a,0}(u_{0}^{-})=\widetilde{\theta }_{0}<0<\alpha =I_{a,0}\left(
u_{0}^{+}\right)
\end{equation*}%
which implies that $u_{0}^{-}\neq u_{0}^{+}.$ This completes the proof.

\section{Proof of Theorem \protect\ref{t1-2}}

For $h\in L^{2/\left( 2-q\right) }\left( \mathbb{R}^{N}\right) ,$ we define%
\begin{equation*}
\widetilde{\theta }_{h}:=\inf \{I_{a,h}(u):u\in H^{1}(\mathbb{R}^{N})\}.
\end{equation*}%
Then we have the following result.

\begin{theorem}
\label{t4}Suppose that conditions $\left( D_{1}\right) -\left( D_{4}\right) $
and $\left( D_{6}\right) $ hold. Then there exist $\overline{a}_{0},%
\overline{\Lambda }_{0}>0$ such that for every $a\in \left( 0,\overline{a}%
_{0}\right) $ and $h\in L^{2/\left( 2-q\right) }\left( \mathbb{R}^{N}\right)
$ with $h\geq 0$ and $0<\left\vert h\right\vert _{L^{2/\left( 2-q\right) }}<%
\overline{\Lambda }_{0},$ we have%
\begin{equation*}
\widetilde{\theta }_{h}\leq \widetilde{\theta }_{0}<\theta _{h}<0,
\end{equation*}%
where $\theta _{h}$ is as in $\left( \ref{4.2}\right) $ and $\widetilde{%
\theta }_{0}$ is as in $\left( \ref{4.3}\right) $. Furthermore, there exists
a nontrivial solution $u_{h,2}^{-}$ of Equation $\left( K_{a,h}\right) $
such that $I_{a,h}(u_{h,2}^{-})=\widetilde{\theta }_{h}.$
\end{theorem}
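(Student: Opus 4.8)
The plan is to combine the local Palais--Smale analysis already developed for $I_{a,h}$ with a careful comparison of the minimum energy levels as the perturbation $h$ is switched on. First I would establish the chain of inequalities $\widetilde{\theta}_h \leq \widetilde{\theta}_0 < \theta_h < 0$. The bound $\widetilde{\theta}_h \leq \widetilde{\theta}_0$ is immediate: since $h \geq 0$ and $q > 0$, the extra term $-\frac{1}{q}\int_{\mathbb{R}^N} h|u^+|^q\,dx$ is nonpositive, so $I_{a,h}(u) \leq I_{a,0}(u)$ for all $u$, whence $\inf_{H^1} I_{a,h} \leq \inf_{H^1} I_{a,0} = \widetilde{\theta}_0$ (using Lemma \ref{lem5} to identify the global infimum of $I_{a,0}$ with $\widetilde{\theta}_0 < 0$, which requires $a < a^\ast$). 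For the strict inequality $\widetilde{\theta}_0 < \theta_h$, I would take a minimizer (or near-minimizer) $u_0^-$ of $I_{a,0}$ with $I_{a,0}(u_0^-) = \widetilde{\theta}_0 < 0$ from Lemma \ref{lem5} and Theorem \ref{t1-1}(ii), note that $u_0^- \not\equiv 0$ so $\int_{\mathbb{R}^N} h|(u_0^-)^+|^q\,dx > 0$ when $h \geq 0$, $h \not\equiv 0$ and $u_0^- > 0$ on a set of positive measure; then $I_{a,h}(u_0^-) < I_{a,0}(u_0^-) = \widetilde{\theta}_0$, and combined with $\widetilde{\theta}_h \leq I_{a,h}(u_0^-)$ this gives $\widetilde{\theta}_h < \widetilde{\theta}_0$. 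The quantity $\theta_h$ from $(\ref{4.2})$ (the local minimum inside the small ball, of mountain-pass type origin) should satisfy $\widetilde{\theta}_0 < \theta_h < 0$ by choosing $\overline{\Lambda}_0$ small: making $|h|_{L^{2/(2-q)}}$ small forces $\theta_h$ close to $0$ from below (by the estimate in Lemma \ref{lem1}), while $\widetilde{\theta}_0$ is a fixed negative number once $a$ is fixed; shrinking $\overline{a}_0$ keeps $\widetilde{\theta}_0$ bounded away from $0$ uniformly, and shrinking $\overline{\Lambda}_0$ pushes $\theta_h$ above it.

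Next I would produce the solution $u_{h,2}^-$ realizing $\widetilde{\theta}_h$. By Lemma \ref{lem5} (applied with the given $h$, using $(D_1)$--$(D_3)$ and $(D_6)$) there exists $R_h > 0$ with $\inf_{H^1} I_{a,h} = \inf_{\overline{B}_{R_h}} I_{a,h} = \widetilde{\theta}_h < 0$ and $I_{a,h}(u) > 0$ for $\|u\| \geq R_h$. Applying the Ekeland variational principle on the complete metric space $\overline{B}_{R_h}$ yields a minimizing sequence $\{u_n\} \subset \overline{B}_{R_h}$ with $I_{a,h}(u_n) \to \widetilde{\theta}_h$ and $I_{a,h}'(u_n) \to 0$ in $H^{-1}$; since the minimum is attained strictly inside the ball (because $I_{a,h} > 0$ on the boundary sphere while $\widetilde{\theta}_h < 0$), the usual argument shows $\{u_n\}$ is a genuine $(PS)_{\widetilde{\theta}_h}$-sequence, not merely constrained. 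This sequence is bounded (it lies in $\overline{B}_{R_h}$), so I would invoke the compactness argument of Theorem \ref{t3} / Lemma \ref{lem4-1}: passing to a subsequence $u_n \rightharpoonup u_{h,2}^-$ in $H^1(\mathbb{R}^N)$, the tail estimate $\int_{|x| \geq R}(|\nabla u_n|^2 + u_n^2)\,dx \leq \epsilon$ from Lemma \ref{lem4-1} combined with the local strong convergence and the Egorov/Hölder control of the $h$-term $(\ref{3.36})$ upgrades weak convergence to strong convergence $u_n \to u_{h,2}^-$ in $H^1(\mathbb{R}^N)$. Hence $I_{a,h}(u_{h,2}^-) = \widetilde{\theta}_h < 0$ and $I_{a,h}'(u_{h,2}^-) = 0$, so $u_{h,2}^-$ is a nontrivial solution of $(K_{a,h})$.

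The main obstacle is the compactness step: because the domain is all of $\mathbb{R}^N$ and the nonlinearity is only asymptotically linear (no Ambrosetti--Rabinowitz condition, $g(x,s)/s$ merely bounded), weak limits can lose mass at infinity, and the standard Sobolev embedding is not compact. This is precisely what Lemma \ref{lem4-1} is designed to circumvent — it gives uniform smallness of the $H^1$-mass of $u_n$ outside a large ball, using the cutoff test function $u_n \xi_R$, the Cerami-type decay $\|I_{a,h}'(u_n)\|\,\|u_n\| \to 0$, and condition $(D_4)$ to absorb $\int g(x,u_n)u_n\xi_R$ into the quadratic part when $|x| \geq R_0$. I would have to check that the minimizing sequence from Ekeland's principle is indeed a $(PS)$-sequence of the required type (Cerami condition) so that Lemma \ref{lem4-1} applies directly; if Ekeland only yields the plain $(PS)$ form $\|I_{a,h}'(u_n)\| \to 0$ with $\{u_n\}$ bounded, that still suffices since $\|I_{a,h}'(u_n)\|\,\|u_n\| \to 0$ follows from boundedness. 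The remaining routine points — that the infimum over $H^1$ equals the infimum over the ball, and that the boundary of the ball is avoided — are handled verbatim by Lemma \ref{lem5}, and the identification of the weak limit as a critical point is standard once strong convergence is in hand.
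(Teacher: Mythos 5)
Your proposal is correct and follows essentially the same route as the paper: the comparison $I_{a,h}\leq I_{a,0}$ (for $h\geq 0$) together with Lemma \ref{lem5} gives $\widetilde{\theta}_{h}\leq\widetilde{\theta}_{0}<0$, the smallness of $\left\vert h\right\vert _{L^{2/\left( 2-q\right) }}$ via Lemma \ref{lem1} forces $\theta_{h}$ close to $0$ and hence above $\widetilde{\theta}_{0}$, and the minimizer $u_{h,2}^{-}$ is obtained by Ekeland's principle on the ball from Lemma \ref{lem5} plus the compactness argument of Theorem \ref{t3}/Lemma \ref{lem4-1}, exactly as in the paper. The only deviations (attempting strictness of $\widetilde{\theta}_{h}<\widetilde{\theta}_{0}$, which is not needed, and the remark on uniformity in $a$) are harmless.
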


\begin{proof}
If $h\equiv 0,$ using Theorem \ref{t1-1} $\left( ii\right) $, then there
exists a nontrivial solution $u_{0}^{-}$ of Equation $\left( K_{a,h}\right) $
such that%
\begin{equation*}
I_{a,0}(u_{0}^{-})=\widetilde{\theta }_{0}=\inf \{I_{a,0}(u):u\in H^{1}(%
\mathbb{R}^{N})\}<0,
\end{equation*}%
where $I_{a,0}=I_{a,h}$ for $h\equiv 0.$ Assume that $h\in L^{2/\left(
2-q\right) }\left( \mathbb{R}^{N}\right) \backslash \left\{ 0\right\} $ with
$h\geq 0.$ Then
\begin{equation*}
I_{a,h}(u_{0}^{-})=\widetilde{\theta }_{0}-\int_{\mathbb{R}%
^{N}}h(x)\left\vert u_{0}^{-}\right\vert ^{q}dx<0.
\end{equation*}%
By Lemma \ref{lem5}, there exists $R_{\ast }>0$ such that $I_{a,h}(u)\geq 0$
for all $u\in H^{1}(\mathbb{R}^{N})$ with $\left\Vert u\right\Vert \geq
R_{\ast },$ and
\begin{eqnarray*}
\widetilde{\theta }_{h} &=&\inf \left\{ I_{a,h}(u):u\in H^{1}(\mathbb{R}%
^{N})\right\} \\
&=&\inf \left\{ I_{a,h}(u):u\in \overline{B}_{R_{\ast }}\right\} \leq
\widetilde{\theta }_{0}<0.
\end{eqnarray*}%
Moreover, by the Ekeland variational principle, there exists a minimizing
sequence $\{u_{n}\}\subset B_{R_{\ast }}$ such that
\begin{equation*}
I_{a,h}(u_{n})\rightarrow \widetilde{\theta }_{h}\text{ and }I_{a,h}^{\prime
}(u_{n})\rightarrow 0\text{ as }n\rightarrow \infty .
\end{equation*}%
Since $\left\{ u_{n}\right\} $ is a bounded sequence, similar argument to
the proof of Theorem \ref{t3}, there exist a subsequence $\left\{
u_{n}\right\} $ and $u_{h,2}^{-}\in B_{R_{\ast }}$ such that $%
u_{n}\rightarrow u_{h,2}^{-}$ strongly in $H^{1}(\mathbb{R}^{N}).$ This
implies that $I_{a,h}^{\prime }(u_{h,2}^{-})=0$ and $I_{a,h}(u_{h,2}^{-})=%
\widetilde{\theta }_{h}.$ This shows that $u_{h,2}^{-}$ is a nontrivial
solution of Equation $\left( K_{a,h}\right) .$

Next, we show that $\widetilde{\theta }_{0}<\theta _{h}.$ By Lemma \ref{lem1}%
, if $h\equiv 0,$ then there exists $\rho _{0}>\rho $ such that $%
I_{a,0}(u)>0 $ for all $u\in B_{\rho _{0}}\backslash \left\{ 0\right\} $ and
$\inf_{u\in B_{\rho _{0}}}I_{a,0}\left( u\right) =0.$ Thus, using $\left( %
\ref{4.2}\right) ,$ we can conclude that $\theta _{h}\rightarrow 0$ as $%
\left\vert h\right\vert _{L^{2/\left( 2-q\right) }}\rightarrow 0.$ Thus,
there exists $\overline{\Lambda }_{0}>0$ such that for every $h\in
L^{2/\left( 2-q\right) }\left( \mathbb{R}^{N}\right) $ with $h\geq 0$ and $%
0<\left\vert h\right\vert _{L^{2/\left( 2-q\right) }}<\overline{\Lambda }%
_{0},$ we have $\widetilde{\theta }_{0}<\theta _{h}.$ This completes the
proof.
\end{proof}

\bigskip

\textbf{Now we give the proof of Theorem \ref{t1-2}:} $\left( i\right) $
Since $h\in L^{2/\left( 2-q\right) }(\mathbb{R})\backslash \left\{ 0\right\}
$ with $0<\left\vert h^{+}\right\vert _{L^{2/\left( 2-q\right) }}<\Lambda
_{0},$ we can choose a function $\phi \in H^{1}\left( \mathbb{R}^{N}\right)
\backslash \left\{ 0\right\} $ such that
\begin{equation*}
\int_{\mathbb{R}^{N}}h(x)\left\vert \phi ^{+}\right\vert ^{q}dx>0.
\end{equation*}%
For $t>0,$ we have
\begin{eqnarray}
I_{a,h}(t\phi ) &=&\frac{a}{2}\widehat{m}\left( \Vert t\phi \Vert
^{2}\right) +\frac{bt^{2}}{2}\Vert \phi \Vert ^{2}-\int_{\mathbb{R}%
^{N}}G(x,t\phi )dx-t^{q}\int_{\mathbb{R}^{N}}h(x)\left\vert \phi
^{+}\right\vert ^{q}dx  \notag \\
&\leq &\frac{a}{2}\widehat{m}\left( \Vert t\phi \Vert ^{2}\right) +\frac{%
t^{2}}{2}\Vert \phi \Vert ^{2}-\frac{t^{2}}{2}\int_{\mathbb{R}^{N}}p_{1}\phi
^{2}dx-t^{q}\int_{\mathbb{R}^{N}}h(x)\left\vert \phi ^{+}\right\vert ^{q}dx
\notag \\
&<&0  \label{4.1}
\end{eqnarray}%
for $t>0$ small enough. Moreover, by Lemma \ref{lem1}, there exists $%
\widetilde{R}_{h}\leq \rho $ such that
\begin{equation*}
I_{a,h}\left( u\right) \geq 0\text{ for all }u\text{ with }\left\Vert
u\right\Vert =\widetilde{R}_{h}.
\end{equation*}%
Hence,
\begin{equation}
\theta _{h}:=\inf \{I_{a,h}(u):u\in \overline{B}_{\widetilde{R}_{h}}\}<0.
\label{4.2}
\end{equation}%
By the Ekeland variational principle, there exists a minimizing sequence $%
\{u_{n}\}\subset B_{\widetilde{R}_{h}}$ such that $I_{a,h}(u_{n})\rightarrow
\theta _{h}$ and $I_{a,h}^{\prime }(u_{n})\rightarrow 0$ as $n\rightarrow
\infty .$ Since $\left\{ u_{n}\right\} $ is a bounded sequence, similar
argument to the proof of Theorem \ref{t3}, there exists a subsequence $%
\left\{ u_{n}\right\} $ and $u_{h,1}^{-}\in B_{\widetilde{R}_{h}}$ such that
$u_{n}\rightarrow u_{h,1}^{-}$ strongly in $H^{1}(\mathbb{R}^{N}),$ which
implies that $I_{a,h}^{\prime }(u_{h,1}^{-})=0$ and $I_{a,h}(u_{h,1}^{-})=%
\theta _{h}<0.$ This shows that $u_{h,1}^{-}$ is a nontrivial solution of
Equation $\left( K_{a,h}\right) .$\newline
$\left( ii\right) $ By part $\left( i\right) $ and Theorem \ref{t3}, there
exist two nontrivial solutions $u_{h,1}^{-}$ and $u_{h}^{+}$ of Equation $%
\left( K_{a,h}\right) $ such that
\begin{equation*}
I_{a,h}\left( u_{h,1}^{-}\right) =\theta _{h}<0<\alpha =I_{a,h}\left(
u_{h}^{+}\right) ,
\end{equation*}%
where $\theta _{h}$ is as in $\left( \ref{4.2}\right) .$ This shows that $%
u_{h,1}^{-}\neq u_{h}^{+}.$\newline
$\left( iii\right) $ By part $\left( ii\right) $ and Theorem \ref{t4},
Equation $\left( K_{a,h}\right) $ has three nontrivial solutions $%
u_{h,1}^{-},u_{h,2}^{-}$ and $u_{h}^{+}$ with%
\begin{equation*}
\widetilde{\theta }_{h}=I_{a,h}\left( u_{h,2}^{-}\right) <I_{a,h}\left(
u_{h,1}^{-}\right) =\theta _{h}<0<\alpha =I_{a,h}\left( u_{h}^{+}\right) ,
\end{equation*}%
which implies that $u_{h,1}^{-},u_{h,2}^{-}$ and $u_{h}^{+}$ are different.
This completes the proof.

\section{Proof of Theorem \protect\ref{t1-3}}

\textbf{Now we give the proof of Theorem \ref{t1-3}:} Let $u$ be a
nontrivial solution of Equation $\left( K_{a,h}\right) .$ Then we have
\begin{eqnarray*}
0 &=&\langle I_{a,b}^{\prime }(u),u\rangle \\
&=&am\left( \left\Vert u\right\Vert ^{2}\right) \left\Vert u\right\Vert
^{2}+b\left\Vert u\right\Vert ^{2}-\int_{\mathbb{R}^{N}}g(x,u)udx-\int_{%
\mathbb{R}^{N}}h\left\vert u^{+}\right\vert ^{q}dx.
\end{eqnarray*}%
The condition $(D_{7})$ implies that
\begin{equation*}
\int_{\mathbb{R}^{N}}g(x,u)udx\leq \int_{\mathbb{R}^{N}}p_{2}u^{2}dx.
\end{equation*}%
Thus, by $\left( \ref{2}\right) $ and the condition $\left( D_{8}\right) ,$%
\begin{eqnarray*}
0 &\geq &am\left( \left\Vert u\right\Vert ^{2}\right) \left\Vert
u\right\Vert ^{2}+b\left\Vert u\right\Vert ^{2}-\int_{\mathbb{R}%
^{N}}p_{2}u^{2}dx-\int_{\mathbb{R}^{N}}h\left\vert u^{+}\right\vert ^{q}dx \\
&\geq &am\left( \left\Vert u\right\Vert ^{2}\right) \left\Vert u\right\Vert
^{2}-\frac{1-b\mu ^{\ast }}{\mu ^{\ast }}\left\Vert u\right\Vert ^{2}-\frac{%
\left\vert h\right\vert _{L^{2/\left( 2-q\right) }}}{S_{2}^{q}}\Vert u\Vert
^{q} \\
&>&0,
\end{eqnarray*}%
which is a contradiction. Therefore, Equation $\left( K_{a,h}\right) $ does
not admits any nontrivial solution. This completes the proof.

\end{document}